\newcommand{\Z}{{\mathbb{Z}}}
\newcommand{\Q}{{\mathbb{Q}}}
\newcommand{\R}{{\mathbb{R}}}
\newcommand{\cO}{{\mathscr O}}
\newcommand{\cF}{{\mathscr F}}
\newcommand{\cC}{{\mathscr C}}
\newcommand{\cL}{{\mathscr L}}
\newcommand{\ep}{{\varepsilon}}
\newcommand{\GL}{{\mathrm {GL}}}
\newcommand{\Gal}{{\mathrm {Gal}}}
\newcommand{\Id}{{\mathrm {Id}}}
\newcommand{\Cl}{{\mathrm {Cl}}}
\newtheorem{thm}{Theorem}[section]
\newtheorem{prop}[thm]{Proposition}
\newtheorem{lem}[thm]{Lemma}
\newtheorem{rem}[thm]{Remark}
\newtheorem{cond}[thm]{Condition}
\theoremstyle{definition}
\numberwithin{equation}{section}
\begin{document}

\large{\centerline{\textbf{$\ell$-TORSION IN CLASS GROUPS}}}

\large{\centerline{\textbf{OF CERTAIN FAMILIES OF $D_4$-QUARTIC FIELDS}}}

\vspace{5mm}

\large{\centerline{Chen An}}
\vspace{3mm}
\normalsize{}
\begin{abstract}
We prove an upper bound for $\ell$-torsion in class groups of almost all fields in certain families of $D_4$-quartic fields. Our key tools are a new Chebotarev density theorem for these families of $D_4$-quartic fields and a lower bound for the number of fields in the families.
\end{abstract}

\begin{section}{Introduction}

In this paper we prove the first unconditional nontrivial upper bound on $\ell$-torsion, for all positive integers $\ell \ge 1$, in class groups of certain $D_4$-quartic fields. In particular, this holds for almost all fields in any infinite family of $D_4$-quartic fields associated to a fixed biquadratic field.

The ideal class group $\Cl_K$, defined for every number field $K$, is the quotient group of the fractional ideals modulo principal ideals. For an integer $\ell \ge 1$, we define the $\ell$-torsion subgroup
\begin{equation*}
\Cl_K[\ell]=\{ [\mathfrak{a} ] \in \Cl_K: [\mathfrak{a} ]^\ell=\Id \}
\end{equation*}
and let $d=[K:\Q]$. We denote $D_K$ as the absolute value of $\mathrm{disc}(K/\Q)$. Then for any $\ep>0$, one has the trivial bound 
$|\Cl_K[\ell]| \le |\Cl_K| \ll_{d,\ep} D_K^{1/2+\ep}$.
But it is widely conjectured that
$|\Cl_K[\ell]| \ll_{d,\ell,\ep} D_K^{\ep}$, for any $\ep>0$. Progress towards this has been difficult. Even under GRH, one only obtains
\begin{equation}\label{grh}
|\Cl_K[\ell]| \ll_{d,\ell,\ep} D_K^{\frac12-\frac{1}{2\ell(d-1)}+\ep},
\end{equation}
for all $\ep>0$; see Proposition 3.1 of \cite{EV07}.

In the recent work of \cite{HBP17}, \cite{EPW17}, \cite{PTW17}, \cite{FW17}, \cite{Wid17}, \cite{TZ19}, nontrivial upper bounds for $\ell$-torsion at least as strong as (\ref{grh}) have been proved for almost all fields in certain families of degree $d$ fields, for any $d \ge 2$, but notably $D_4$-quartic fields have not been treated in these works. This omission motivates the work of this paper, which exhibits an infinite collection of families of $D_4$-quartic fields for which we can prove such bounds. See also \cite{FW18} and \cite{PTW19} for results on $\ell$-torsion bounds from the perspective of moments.

For a number field $K$, let $\widetilde{K}$ be the Galois closure of $K$ over $\Q$ within a fixed choice of $\overline{\Q}$. By a $D_4$-quartic field $K$ we mean a quartic extension $K$ of $\Q$ such that $\mathrm{Gal}(\widetilde{K}/\Q) \cong D_4$, and we will define our families of $D_4$-quartic fields according to a fixed biquadratic extension of $\Q$. We write $Q=\Q(\sqrt{a},\sqrt{b})$ as a biquadratic field over $\Q$, where $a,b$ are distinct square-free integers not equal to $0$ or $1$. Denoting $\xi=\mathrm{gcd}(|a|,|b|)$, we have $Q=\mathrm{span}_{\Q} \{1,\sqrt{a},\sqrt{b},\frac{\sqrt{ab}}{\xi} \}$. For any such $Q$, we define the family 
\begin{equation*}
\cF_4(Q)=\{K: K \text{ is a } D_4\text{-quartic field}, \widetilde{K} \text{ contains } Q=\Q(\sqrt{a},\sqrt{b})\}
\end{equation*}
and denote
\begin{equation*}\label{f4}
\cF_4(Q;X)=\{K: K \in \cF_4(Q), D_K \le X \}.
\end{equation*}

From the lattice of fields in Section \ref{mot}, we will see that for any $D_4$-quartic field $K$, $\widetilde{K}$ contains a unique biquadratic subextension $Q$. Therefore, taking all the families $\cF_4(Q)$ for $Q=\Q(\sqrt{a},\sqrt{b})$ as $a$ and $b$ vary, we obtain all $D_4$-quartic fields. In other words, taken together, these families are ``generic''.

Our first main result of this paper is the following theorem on bounding $\ell$-torsion in class groups of almost all fields in $\cF_4(Q;X)$ for each choice of a biquadratic field $Q$ such that $\cF_4(Q) \neq \emptyset$.
\begin{thm}\label{tor}
Let $Q=\Q(\sqrt{a},\sqrt{b})$ be such that $\cF_4(Q) \neq \emptyset$. For every $0<\ep<\frac14$ sufficiently small, and every integer $\ell \ge 1$, there exists a parameter $B_1=B_1(\ell,\ep)$ such that for every $X \ge 1$, aside from at most $B_1X^{\ep}$ fields in $\cF_4(Q;X)$, every field $K \in \cF_4(Q;X)$ satisfies
\begin{equation}\label{toreq}
|\Cl_K[\ell]| \ll_{\ell,\ep} D_K^{\frac12-\frac{1}{6\ell}+\ep}.
\end{equation}
\end{thm}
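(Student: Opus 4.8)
The plan is to combine the method of Ellenberg and Venkatesh with a Chebotarev density theorem averaged over the family $\cF_4(Q;X)$. Recall from \cite{EV07} that for a number field $K$ of degree $n$, an integer $\ell\ge1$, and any $\delta$ with $0<\delta<\tfrac{1}{2\ell(n-1)}$, one has
\begin{equation*}
|\Cl_K[\ell]|\ \ll_{n,\ell,\delta}\ D_K^{1/2+\ep}\,\bigl(1+M_K(D_K^{\delta})\bigr)^{-1},
\end{equation*}
where $M_K(Y)$ is the number of rational primes $p\le Y$ admitting a degree-one prime ideal of $\cO_K$ above them. For $n=4$ the threshold $\tfrac{1}{2\ell(n-1)}$ equals $\tfrac{1}{6\ell}$, so, taking $\delta=\delta(\ell,\ep):=\tfrac{1}{6\ell}-\ep$, it suffices to prove that all $K\in\cF_4(Q;X)$ outside an exceptional set of size $\ll_{\ell,\ep}X^{\ep}$ satisfy $M_K(D_K^{\delta})\gg_{\ell,\ep}D_K^{\delta}/\log D_K$. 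Since any rational prime splitting completely in $\widetilde K$ automatically contributes to $M_K$, it is enough to produce such primes, which I would do by studying the zeros of $\zeta_{\widetilde K}(s)$.

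The next step is to parametrize the family using the lattice of subfields from Section~\ref{mot}. For $K\in\cF_4(Q)$ the biquadratic field $Q$ is the maximal abelian subextension of $\widetilde K/\Q$, so $\widetilde K/Q$ is quadratic, say $\widetilde K=Q(\sqrt\gamma)$, cut out by a quadratic Hecke character $\chi=\chi_\gamma$ of $Q$; the condition $\Gal(\widetilde K/\Q)\cong D_4$ rules out only a thin, explicitly described set of $\gamma$ (those for which $\widetilde K/\Q$ is biquadratic, cyclic, etc.). Then $\zeta_{\widetilde K}(s)=\zeta_Q(s)L(s,\chi)$; a prime $p$ splits completely in $\widetilde K$ exactly when it splits completely in the fixed field $Q$ (a density-$\tfrac14$ splitting condition) and $\chi(\mathfrak p)=1$ for $\mathfrak p\mid p$ in $Q$; and the conductor--discriminant formula gives $D_{\widetilde K}=D_Q^{2}\,N_{Q/\Q}(\mathfrak f_\chi)$, while $D_K$ and $D_{\widetilde K}$ are polynomially comparable. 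Hence counting completely split primes of $\widetilde K$ up to $D_K^{\delta}$ reduces to a weighted prime sum attached to $\zeta_Q(s)L(s,\chi)$ over a range that is a fixed positive power of $N_{Q/\Q}(\mathfrak f_\chi)$, whose main term is governed entirely by how close the nearest zero of $L(s,\chi)$ lies to $s=1$. (One checks that $\mathrm{Ind}_Q^{\Q}\chi\cong\rho\oplus\rho$ for the two-dimensional irreducible representation $\rho$ of $D_4$, so $L(s,\chi)=L(s,\rho)^{2}$; equivalently, the only ``moving'' analytic object is the family of quadratic Hecke $L$-functions of the fixed field $Q$.)

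The crux — and the step I expect to be the main obstacle — is the Chebotarev density theorem for the family: for all but $\ll_{\ell,\ep}X^{\ep}$ of the characters $\chi$ parametrizing $\cF_4(Q;X)$, the $L$-function $L(s,\chi)$ has no zero close enough to $s=1$ to spoil the count of primes up to $D_K^{\delta}$. I would obtain this from two analytic inputs. First, a log-free zero-density estimate averaged over the family: a bound roughly of the form $\sum_{\chi}N_\chi(\alpha,T)\ll\bigl(|\cF_4(Q;X)|\cdot T\bigr)^{A(1-\alpha)}$ for the number of zeros of $L(s,\chi)$ in $\Re s\ge\alpha$, $|\Im s|\le T$, which forces all but a negligible proportion of the $\chi$ to have no zero with real part exceeding $1-c(\ell,\ep)$ save possibly a single real ``Siegel'' zero. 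Second, a Landau--Page type input: since each $\chi$ is a quadratic character of the fixed field $Q$, at most one such $\chi$ per dyadic range of conductors can carry an exceptional real zero, so the $\chi$ whose $L$-functions have a Siegel zero number only $\ll(\log X)^{O(1)}\ll_{\ep}X^{\ep}$. For every surviving $\chi$, feeding the resulting zero-free region into the explicit formula for $\psi_{\widetilde K}(Z)$ with $Z=D_K^{\delta}$ — a range that is a fixed positive power of the conductor, hence amply large — yields $\psi_{\widetilde K}(Z)\sim Z$, so $M_K(D_K^{\delta})\gg D_K^{\delta}/\log D_K$. The genuine difficulties are to control conductors and ramification uniformly across the family, to beat the trivial bound in the zero-density average for a family parametrized by characters of a single quartic field (taking care that the factorization $L(s,\chi)=L(s,\rho)^{2}$ does not cost a factor of $2$ in the exponents), and to track the passage between $D_K$, $D_{\widetilde K}$, and $N_{Q/\Q}(\mathfrak f_\chi)$.

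Finally I would assemble the pieces. A lower bound $|\cF_4(Q;X)|\gg_Q X^{c_0}$ for a fixed $c_0>0$ — proved from the same parametrization by counting quadratic extensions of $Q$ with conductor in the relevant range and discarding the thin degenerate locus — guarantees that the exceptional set of size $\ll_{\ell,\ep}X^{\ep}$ is negligible, so that indeed almost all $K\in\cF_4(Q;X)$ satisfy $M_K(D_K^{\delta})\gg_{\ell,\ep}D_K^{\delta}/\log D_K$. Applying the Ellenberg--Venkatesh inequality with $\delta=\tfrac{1}{6\ell}-\ep$ then gives, for every such $K$,
\begin{equation*}
|\Cl_K[\ell]|\ \ll_{\ell,\ep}\ D_K^{1/2-\delta+\ep}\ \le\ D_K^{\frac12-\frac{1}{6\ell}+2\ep},
\end{equation*}
and rescaling $\ep$ yields \eqref{toreq}. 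The two-to-one correspondence $K\mapsto\widetilde K$ and the handling of the finitely many ramified and archimedean places are absorbed into the implied constants and the exceptional set.
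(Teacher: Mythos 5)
Your overall strategy mirrors the paper's: reduce via the Ellenberg--Venkatesh criterion (Proposition \ref{ev}) to producing $\gg D_K^{\frac{1}{6\ell}-\ep}/\log D_K$ small primes that split completely in $K$, observe that as $K$ varies in $\cF_4(Q)$ the only moving factor of $\zeta_{\widetilde K}(s)$ is the one attached to the quadratic extension $\widetilde K/Q$ (your identity $L(s,\chi)=L(s,\rho_{\widetilde K})^{2}$ is exactly the factorization (\ref{l})), then discard $\ll X^{\ep}$ fields by a zero-density estimate over the family and feed the resulting zero-free region into an effective Chebotarev argument. The difference is in how the key analytic input is obtained, and there your proposal has a genuine gap: you assert, but do not supply, a log-free zero-density estimate for the family of quadratic Hecke characters of $Q$, plus a Landau--Page statement for their exceptional zeros. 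The paper gets this input by invoking the strong Artin conjecture for dihedral groups to realize each $L(s,\rho_{\widetilde K})$ as a cuspidal $\GL_2(\mathbb{A}_\Q)$ $L$-function and then applying Theorem 2 of \cite{KM02}, after verifying Ramanujan--Petersson, the conductor bound $D_{\widetilde K}\ll D_K^{4}$, the family-size bound $\ll X$, and convexity; note that this density bound already controls real zeros near $s=1$, so no separate Siegel-zero count is needed for the varying factor. The only exceptional zeros needing special treatment are those of the three fixed Dirichlet $L$-functions dividing $\zeta_Q$, which the paper handles by working to the right of $\beta_{\max}$ (this is where the requirement $\delta<1-\beta_{\max}$ and the dependence of constants on $Q$ enter); your sketch glosses over this fixed factor.

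There is also a concrete error in your parametrization and in your proposed lower bound for the family. You claim that for $\widetilde K=Q(\sqrt{\gamma})$ the condition $\Gal(\widetilde K/\Q)\cong D_4$ excludes only a thin set of $\gamma$, and that $|\cF_4(Q;X)|\gg X^{c_0}$ follows by counting quadratic extensions of $Q$ and discarding a degenerate locus. This is backwards: for generic $\gamma\in Q^{\times}$ the field $Q(\sqrt{\gamma})$ is not even normal over $\Q$; normality forces $\sigma(\gamma)/\gamma\in (Q^{\times})^{2}$ for every $\sigma\in\Gal(Q/\Q)$, and obtaining $D_4$ rather than one of the other group extensions amounts to the norm conditions (\ref{co1})--(\ref{co3}) behind Condition \ref{1234}. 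This is precisely why the hypothesis $\cF_4(Q)\neq\emptyset$ is needed, and why the paper proves Theorem \ref{vital} by an explicit construction: starting from one solution of $g^{2}-h^{2}a=n^{2}b$ it produces the fields $K_{[m]}=\Q(\sqrt{g_0m+h_0m\sqrt{a}})$ for squarefree $m$ coprime to $ab$, yielding $\gg_{Q}X^{1/2}$ distinct fields of bounded discriminant. So your route to the family count fails as sketched. This flaw does not by itself invalidate the torsion bound, since the statement of Theorem \ref{tor} only bounds the number of exceptional fields (and the paper's proof uses only Theorem \ref{qwe} together with Propositions \ref{smallprime} and \ref{ev}), but the lower bound is what makes ``aside from $B_1X^{\ep}$ fields'' meaningful, and your argument for it needs to be replaced by something like the paper's norm-equation construction.
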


Theorem \ref{tor} provides the first unconditional nontrivial bound for $\ell$-torsion in class groups of infinite families of $D_4$-quartic fields.

In order to show that almost all fields in $\cF_4(Q)$ satisfy (\ref{toreq}), we must exhibit a lower bound for $|\cF_4(Q;X)|$ that grows strictly faster than $B_1X^{\ep}$. This leads to the following theorem, as our second main result.

\begin{thm}\label{vital}
Let $Q=\Q(\sqrt{a},\sqrt{b})$ be such that $\cF_4(Q) \neq \emptyset$. Then 
\begin{equation*}
X^{1/2} \ll_{Q} |\cF_4(Q;X)| \ll  X.
\end{equation*}
\end{thm}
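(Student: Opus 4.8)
The plan is to treat the two bounds separately; the upper bound is essentially immediate, and the lower bound is the substantive part.

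\emph{Upper bound.} Every $K\in\cF_4(Q;X)$ is a $D_4$-quartic field with $D_K\le X$, and the number of $D_4$-quartic fields of discriminant at most $X$ is $\ll X$ by the classical count of quartic fields arranged by the Galois group of their closure; this gives the stated ($Q$-uniform) bound. A more self-contained variant: by the subfield lattice of $D_4$ (Section~\ref{mot}) the unique quadratic subfield $F$ of any $K\in\cF_4(Q)$ must be one of the three quadratic subfields of $Q$, so, using the conductor--discriminant identity $D_K=D_F^2\,|N_{F/\Q}(\mathfrak d_{K/F})|$ (with $\mathfrak d_{K/F}$ the relative discriminant), $|\cF_4(Q;X)|$ is bounded by the sum over these three fixed $F$ of the number of quadratic extensions $K/F$ with $|N_{F/\Q}(\mathfrak d_{K/F})|\le X/D_F^2$, and the number of quadratic extensions of a fixed number field with relative conductor norm at most $Y$ is $\ll_F Y$; hence $|\cF_4(Q;X)|\ll_Q X$.

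\emph{Lower bound.} First I would record the parametrization of $\cF_4(Q)$ coming from the subgroup lattice of $D_4$: if $K$ is a $D_4$-quartic field with unique quadratic subfield $F=\Q(\sqrt m)$ ($m$ squarefree), writing $K=F(\sqrt\delta)$ with $\delta\in F^\times$ and letting $\sigma$ generate $\Gal(F/\Q)$, then $\widetilde K=F(\sqrt\delta,\sqrt{\delta^\sigma})$, its unique biquadratic subfield is $\Q(\sqrt m,\sqrt{N_{F/\Q}(\delta)})$ (with $N_{F/\Q}(\delta)=\delta\delta^\sigma\in\Q^\times$), and $\widetilde K/\Q$ has group $D_4$ --- equivalently $K/\Q$ is non-Galois --- exactly when $\delta^\sigma/\delta\notin(F^\times)^2$. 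Therefore $K$ lies in $\cF_4(Q)$ if and only if $F$ is a quadratic subfield of $Q$ and $N_{F/\Q}(\delta)$ is congruent modulo $(\Q^\times)^2$ to a radical of $Q$ other than $m$. The key point is that this last condition depends only on $\delta$ modulo $\Q^\times$, since scaling $\delta$ by $t\in\Q^\times$ multiplies $N_{F/\Q}(\delta)$ by the square $t^2$.

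Now I would exploit $\cF_4(Q)\ne\emptyset$. Fix $K_1\in\cF_4(Q)$, write $K_1=F(\sqrt{\delta_1})$ as above, so $N_{F/\Q}(\delta_1)\equiv n_0\pmod{(\Q^\times)^2}$ for some squarefree radical $n_0$ of $Q$ distinct from $m$, and scale $\delta_1$ by a rational to obtain $\delta_0\in F^\times$ with $N_{F/\Q}(\delta_0)=n_0$. For each positive squarefree integer $t$ coprime to the fixed integer $2D_F n_0$ set $K_t:=F(\sqrt{t\delta_0})$. Then $N_{F/\Q}(t\delta_0)=t^2n_0\equiv n_0\pmod{(\Q^\times)^2}$, while $(t\delta_0)^\sigma/(t\delta_0)=\delta_0^\sigma/\delta_0=N_{F/\Q}(\delta_0)/\delta_0^2\equiv n_0\pmod{(F^\times)^2}$, which is a non-square in $F$ because $n_0$ is a rational integer that is neither a rational square nor $m$ times a rational square; hence by the parametrization each $K_t$ is a $D_4$-quartic field in $\cF_4(Q)$. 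Since $\mathfrak d_{K_t/F}$ divides the ideal $(4t\delta_0)$, we get $|N_{F/\Q}(\mathfrak d_{K_t/F})|\le 16|n_0|t^2$, hence $D_{K_t}=D_F^2|N_{F/\Q}(\mathfrak d_{K_t/F})|\ll_Q t^2$, so $K_t\in\cF_4(Q;X)$ for every such $t$ up to $c\,X^{1/2}$ for a suitable $c=c(Q)>0$. Finally the $K_t$ are pairwise distinct (indeed pairwise non-isomorphic): $F(\sqrt{t\delta_0})=F(\sqrt{t'\delta_0})$ would force $t/t'\in(F^\times)^2\cap\Q^\times=(\Q^\times)^2\cup m(\Q^\times)^2$, impossible for distinct positive squarefree $t,t'$ coprime to $m$, and the only other conjugate $F(\sqrt{t\delta_0^\sigma})$ of $K_t$ is excluded similarly. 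Counting squarefree integers up to $c\,X^{1/2}$ in the prescribed coprimality class yields $\gg_Q X^{1/2}$ fields in $\cF_4(Q;X)$.

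\emph{Main obstacle.} The genuinely delicate step is establishing the parametrization: reading off from the $D_4$-subgroup lattice that the quadratic subfield of $K$ must sit inside $Q$, that the unique biquadratic subfield of $\widetilde K$ is precisely $\Q(\sqrt m,\sqrt{N_{F/\Q}(\delta)})$, and --- keeping track of the normalization $\sqrt{ab}/\xi$ of the third radical --- that the norm condition selects exactly the two radicals of $Q$ other than $m$, with $\delta^\sigma/\delta\notin(F^\times)^2$ correctly separating the $D_4$ case from the $C_4$ and $V_4$ cases. Once that is in place, the discriminant estimate, the distinctness of the $K_t$, and the final count are routine. (The only external input for the $Q$-uniform upper bound is the known estimate $\#\{D_4\text{-quartic }K:D_K\le X\}\ll X$; note also that the true order of growth of $|\cF_4(Q;X)|$ is left undetermined by the theorem.)
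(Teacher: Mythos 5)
Your proposal is correct and is essentially the paper's argument: the upper bound is cited from the known linear count of $D_4$-quartic fields, and the lower bound is the same construction the paper uses — take a seed field of $\cF_4(Q)$ (the paper's $K_0=\Q(\sqrt{g_0+h_0\sqrt{a}})$ with $g_0^2-h_0^2a=n_0^2b$, your $F(\sqrt{\delta_0})$), scale the radicand by squarefree integers coprime to fixed data, bound the discriminant by $\ll_Q t^2$, check the scaled fields stay in $\cF_4(Q)$ and are pairwise distinct via rational squares in $F$, and count squarefree integers up to $cX^{1/2}$. The only slip is minor: your $\delta_0$, normalized to have norm exactly $n_0$, need not be an algebraic integer, so the divisibility $\mathfrak{d}_{K_t/F}\mid(4t\delta_0)$ should be invoked after clearing denominators (take $\delta_0$ integral with $N_{F/\Q}(\delta_0)=n_0u^2$ for a fixed integer $u$), which is exactly how the paper proceeds by working with the integral triple $(g_0,h_0,n_0)$; your streamlining also avoids the paper's case analysis over its six subfamilies, but the substance is the same.
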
 

The lower bound in Theorem \ref{vital} is the first nontrivial lower bound for such families of $D_4$-quartic fields. The upper bound is an immediate consequence of the result $N_4(D_4,X) \sim c(D_4)X$ (where $c(D_4)>0$) in \cite{CDO02}. Taking Theorem \ref{tor} and Theorem \ref{vital} together, we know that when $\cF_4(Q) \neq \emptyset$, almost all fields $K \in \cF_4(Q;X)$ satisfy (\ref{toreq}).
We might expect that $|\cF_4(Q;X)| \sim CX^{1/2}$ for some positive constant $C$. But to show that Theorem \ref{tor} and our third main result -- an effective Chebotarev density theorem (see Theorem \ref{main}) -- hold for almost all fields in $\cF_4(Q;X)$, it suffices to find any constant $\beta>0$ such that $|\cF_4(Q;X)| \gg_{Q} X^{\beta}$. 

We say that $b$ is a norm of $\Q(\sqrt{a})$ if $b$ is a norm of an element in $\Q(\sqrt{a})$. For the set $\cF_4(Q)$ to be nonempty, consider the following three criteria on $a,b$:



\begin{equation}\label{co1}
b \text{ is a norm of } \Q(\sqrt{a});
\end{equation} 
\begin{equation}\label{co2}
-b \text{ is a norm of } \Q(\sqrt{a});
\end{equation}
\begin{equation}\label{co3}
-a \text{ is a norm of } \Q(\sqrt{b}). 
\end{equation}

We will prove that $\cF_4(Q) \neq \emptyset$ is equivalent to the following condition:

\begin{cond}\label{1234}
The pair $(a,b)$ satisfies at least one of (\ref{co1}), (\ref{co2}), or (\ref{co3}).



\end{cond}

It is easy to see that the relationship of $a$ and $b$ is independent of the order, i.e., if the pair $(a,b)$ satisfies Condition \ref{1234}, then the pair $(b,a)$ does as well.
Note that we also have symmetry in Condition \ref{1234} among $a,b,\frac{ab}{\xi^2}$ (recall that $\xi=\mathrm{gcd}(|a|,|b|)$).
By this we mean that if the pair $(a,b)$ satisfies Condition \ref{1234}, then so does $(a,\frac{ab}{\xi^2})$, or $(b,\frac{ab}{\xi^2})$, and vice versa. This is because $b$ is a norm of $\Q(\sqrt{a}) \iff a$ is a norm of $\Q(\sqrt{b})$; $\frac{ab}{\xi^2}$ is a norm of $\Q(\sqrt{a}) \iff -b$ is a norm of $\Q(\sqrt{a})$ (since $-a=(-\sqrt{a})\cdot (\sqrt{a})$ is a norm of $\Q(\sqrt{a})$).

Moreover, there are infinitely many pairs $(a,b)$ satisfying Condition \ref{1234}. For example, when $b_1$ is a prime and $b_1 \equiv \pm 1 \ (\mathrm{mod} \ 8)$, $(2,b_1)$ satisfies (\ref{co1}); when $b_2$ is a prime and $b_2 \equiv 11 \ (\mathrm{mod} \ 12)$, $(3,b_2)$ satisfies (\ref{co2}). There are infinitely many such $b_1,b_2$, by Dirichlet's theorem on primes in arithmetic progressions.

\begin{rem}\label{dif}
\normalfont
One notices that we only deal with infinite families of $D_4$-quartic fields but not all $D_4$-quartic fields. If we take the union of all the $\le B_1X^{\ep}$ exceptional fields as $a,b$ vary, we possibly get $\gg X$ exceptional fields. It remains an interesting open problem to prove the analogue of Theorem \ref{tor} for all $D_4$-quartic fields simultaneously. 
\end{rem}

\subsection*{Outline of the method}

At its foundation, our approach is analogous to that of \cite{PTW17}. The difference from \cite{PTW17} will be shown explicitly in Section \ref{mot}. After the work of Ellenberg and Venkatesh in \cite{EV07}, to prove (\ref{toreq}) in Theorem \ref{tor} for a number field $K$, it will suffice to be able to count the number of small unramified primes which split completely in $K$ (see Proposition \ref{ev}). Our main idea is that after fixing a biquadratic field $Q=\Q(\sqrt{a},\sqrt{b})$, we establish a new effective Chebotarev density theorem (Theorem \ref{main}) for almost all fields in $\cF_4(Q;X)$. In particular, studying the family $\cF_4(Q)$, as was recommended in Remark 6.12 of \cite{PTW17}, avoids the barrier encountered in \cite{PTW17} when considering $D_4$-quartic fields; see Section \ref{mot}. 

For a $D_4$-quartic field $K$ and its Galois closure $\widetilde{K}$, and for any fixed conjugacy class $\cC$ in $G \cong D_4$, we define the prime counting function as 
\begin{equation*}
\pi_\cC(x,\widetilde{K}/\Q)=| \{p \text{ prime}: \ p \text{ is unramified in }\widetilde{K}, \left[ \frac{\widetilde{K}/\Q}{p} \right]=\cC, p \le x \} |,
\end{equation*}
where $\left[ \frac{\widetilde{K}/\Q}{p} \right]$ is the Artin symbol, i.e., the conjugacy class of the Frobenius element corresponding to the extension $\widetilde{K}/\Q$ and the prime $p$. 

Obtaining an accurate count for $\pi_\cC(x,\widetilde{K}/\Q)$ depends on a zero-free region for $\zeta_{\widetilde{K}}(s)$. Thus, we consider the factorization of $\zeta_{\widetilde{K}}(s)$, i.e.,
\begin{equation}\label{l}
\zeta_{\widetilde{K}}(s)=\ \prod_{\mathclap{\rho \in \mathrm{Irr}(D_4)}} \ L(s,\rho,\widetilde{K}/\Q)^{\dim \rho}=\zeta(s)L(s,\chi_{a^\ast})L(s,\chi_{b^\ast})L(s,\chi_{({\frac{ab}{\xi^2}})^\ast})L^2(s,\rho_{\widetilde{K}}).
\end{equation}
Here we use the notation that for $c \in \{a,b,\frac{ab}{\xi^2} \}$, 
\begin{equation*}
c^\ast=
\begin{cases}
c, & \text{ if } c \equiv 1 \ (\mathrm{ mod } \ 4) \\
4c, & \text{ if } c \equiv 2,3 \ (\mathrm{ mod } \ 4)
\end{cases}
\end{equation*}
is the fundamental discriminant of the field $\Q(\sqrt{c})$ over $\Q$, and $\chi_{c^\ast}(\cdot)=\left( \frac{c^{\ast}}{\cdot} \right)$ is the real primitive Dirichlet character given by the Kronecker symbol. Note also that $\rho_{\widetilde{K}}$ is the 2-dimensional faithful representation of $D_4$. 

Since we have fixed $a$ and $b$ as $K$ varies, the $L$-functions $L(s,\chi_{a^\ast})$, $L(s,\chi_{b^\ast})$, and $L(s,\chi_{({\frac{ab}{\xi^2}})^\ast})$ are fixed in (\ref{l}), and hence so is the Dedekind zeta function of the biquadratic field,
\begin{equation*}
\zeta_Q(s)=\zeta(s)L(s,\chi_{a^\ast})L(s,\chi_{b^\ast})L(s,\chi_{({\frac{ab}{\xi^2}})^\ast}).
\end{equation*}
Therefore, as $K$ varies in $\cF_4(a,b)$, the only varying $L$-factor in $\zeta_{\widetilde{K}}(s)$ is $L(s,\rho_{\widetilde{K}})$. This is critical to the success of our method, see Remark \ref{suc}.

We first prove a Chebotarev density theorem with an assumed zero-free region for $\zeta_{\widetilde{K}}(s)/\zeta_Q(s)$.

\begin{thm}[Chebotarev density theorem with assumed zero-free region]\label{cdtzf}
Let $0<\ep_0<\frac14$ be sufficiently small. Suppose that $Q=\Q(\sqrt{a},\sqrt{b})$ is a biquadratic field with $\cF_4(Q) \neq \emptyset$. Suppose also that for $K \in \cF_4(Q)$ such that $D_{\widetilde{K}} \ge C_7$ for an absolute constant $C_7$ given in (\ref{c7}), $\zeta_{\widetilde{K}}(s)/\zeta_Q(s)=L^2(s,\rho_{\widetilde{K}})$ (hence $L(s,\rho_{\widetilde{K}})$) has no zero in \begin{equation}\label{azfr2}
[1-\delta,1] \times [-(\log D_{\widetilde{K}})^{2/\delta},(\log D_{\widetilde{K}})^{2/\delta}],
\end{equation}
where 
\begin{equation}\label{delta}
\delta=\frac{\ep_0}{42+4\ep_0}.
\end{equation} 
Then for every conjugacy class $\cC \subset G=D_4$,
\begin{equation}\label{bnd}
\left| \pi_{\cC}(x,\widetilde{K}/\Q)-\frac{|\cC|}{|G|}\mathrm{Li}(x) \right| \le \frac{|\cC|}{|G|} \frac{x}{(\log x)^2}
\end{equation}
for all
\begin{equation*}
x \ge \kappa_1 \exp{[\kappa_2(\log \log(D_{\widetilde{K}}^{\kappa_3}))^{2}]} 
\end{equation*}
for parameters $\kappa_i=\kappa_i(a,b,\ep_0)$ (see (\ref{kappa1}), (\ref{kappa2}), (\ref{kappa3})).
\end{thm}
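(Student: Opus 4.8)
The plan is to run the explicit-formula argument of Lagarias--Odlyzko for the Chebotarev counting function, taking advantage of the factorization (\ref{l}): once $a,b$ are fixed, the three quadratic $L$-factors of $\zeta_Q(s)$ are fixed, so only the degree-$2$ Artin $L$-function $L(s,\rho_{\widetilde K})$ requires a bound uniform in $K$. First I would pass to the Chebotarev analogue $\psi_\cC(x,\widetilde K/\Q)$ of the Chebyshev function (so $\psi_\cC\sim\tfrac{|\cC|}{|G|}x$) and expand it by orthogonality of the irreducible characters of $G=D_4$:
\[
\psi_\cC(x,\widetilde K/\Q)=\frac{|\cC|}{|G|}\sum_{\rho\in\mathrm{Irr}(D_4)}\overline{\chi_\rho(g_\cC)}\,\psi(x,\rho)+O\big(\sqrt{x}+\log D_{\widetilde K}\big),
\]
where $g_\cC\in\cC$, $\psi(x,\rho)=\sum_{n\le x}\Lambda_\rho(n)$ is the partial sum of the coefficients of $-L'/L(s,\rho)$, and the error absorbs prime powers and the $O(\log D_{\widetilde K})$ ramified primes. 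For $\rho$ trivial, $\psi(x,\rho)=\psi(x)=x+O(x\exp(-c\sqrt{\log x}))$ supplies the main term. For the three nontrivial one-dimensional $\rho$, $\psi(x,\rho)$ equals $\psi(x,\chi_{c^\ast})=\sum_{n\le x}\chi_{c^\ast}(n)\Lambda(n)$ for the corresponding fixed quadratic character in (\ref{l}); since these $L$-functions depend only on $a,b$, the classical prime number theorem -- together with the fact that any Landau--Siegel zero of a fixed quadratic character sits at a fixed positive distance from $s=1$ -- gives $\psi(x,\chi_{c^\ast})\ll_{a,b}x\exp(-c_{a,b}\sqrt{\log x})$, which is $\le\tfrac{1}{100}\cdot\tfrac{|\cC|}{|G|}\cdot\tfrac{x}{(\log x)^2}$ once $x\ge x_1(a,b)$.

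The heart of the matter is the varying factor $\psi(x,\rho_{\widetilde K})$. Here one uses that $\rho_{\widetilde K}$ is monomial -- induced from a faithful character of the cyclic order-$4$ subgroup of $D_4$ -- so that $L(s,\rho_{\widetilde K})$ is a Hecke $L$-function over one of the three (fixed) quadratic subfields of $Q$, hence entire; and by the conductor--discriminant formula $D_{\widetilde K}=D_Q\,\mathfrak f(\rho_{\widetilde K})^2$, so $L(s,\rho_{\widetilde K})$ has analytic conductor $\asymp_Q D_{\widetilde K}^{1/2}$. I would then apply the truncated explicit formula at height $T:=(\log D_{\widetilde K})^{2/\delta}$,
\[
\psi(x,\rho_{\widetilde K})=-\sum_{\substack{L(\beta+i\gamma,\,\rho_{\widetilde K})=0\\ 0<\beta<1,\ |\gamma|\le T}}\frac{x^{\beta+i\gamma}}{\beta+i\gamma}+O\!\Big(\frac{x(\log D_{\widetilde K}x)^2}{T}\Big).
\]
By hypothesis the region (\ref{azfr2}) -- which contains $[1-\delta,1]\times[-T,T]$, with $\delta$ as in (\ref{delta}) -- contains no zero of $L(s,\rho_{\widetilde K})$, so every zero in the sum satisfies $\beta\le 1-\delta$; together with the standard count $N(t+1,\rho_{\widetilde K})-N(t,\rho_{\widetilde K})\ll\log(D_{\widetilde K}(|t|+2))$, which yields $\sum_{|\gamma|\le T}|\beta+i\gamma|^{-1}\ll(\log D_{\widetilde K})^2$, the zero sum is $\ll x^{1-\delta}(\log D_{\widetilde K})^2$. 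Hence $\psi(x,\rho_{\widetilde K})\ll x^{1-\delta}(\log D_{\widetilde K})^2+x(\log D_{\widetilde K}x)^2(\log D_{\widetilde K})^{-2/\delta}$, and a short calculation shows that both terms are $\le\tfrac{1}{100}\cdot\tfrac{|\cC|}{|G|}\cdot\tfrac{x}{(\log x)^2}$ whenever $x$ lies in a window $W$ whose lower end is a fixed power of $\log D_{\widetilde K}$ (the zero sum forces $\log x\gg_\delta\log\log D_{\widetilde K}$) and whose upper end is $\exp\big(c(\log D_{\widetilde K})^{1/(2\delta)}\big)$ (the truncation term forces $\log x\ll(\log D_{\widetilde K})^{1/(2\delta)}$). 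Feeding the $\le 5$ pieces back in, with the bounded weights $|\chi_\rho(g_\cC)|\le 2$, gives $\psi_\cC(x,\widetilde K/\Q)=\tfrac{|\cC|}{|G|}x+O\big(\tfrac{|\cC|}{|G|}\tfrac{x}{(\log x)^2}\big)$ on $W$, and partial summation converts this to (\ref{bnd}) with main term $\tfrac{|\cC|}{|G|}\mathrm{Li}(x)$.

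For $x$ beyond the window $W$, i.e. $x\ge\exp\big(c(\log D_{\widetilde K})^{1/(2\delta)}\big)$, I would instead invoke the unconditional effective Chebotarev density theorem of Lagarias--Odlyzko (with Serre's refinement): for $x\ge\exp\big(c'(\log D_{\widetilde K})^{2}\big)$ one has
\[
\pi_\cC(x,\widetilde K/\Q)=\frac{|\cC|}{|G|}\mathrm{Li}(x)+O\!\Big(\frac{|\cC|}{|G|}\,\mathrm{Li}(x^{\beta_1})+x\exp\big(-c''(\log x)^{1/2}\big)\Big),
\]
with implied constants depending only on $n_{\widetilde K}=8$, where $\beta_1$, if present, is a real zero of $\zeta_{\widetilde K}(s)$. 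Such a $\beta_1$ can only come from one of the four real-valued factors in (\ref{l}); the hypothesis (\ref{azfr2}) rules out $\beta_1\ge 1-\delta$ coming from $L(s,\rho_{\widetilde K})$, while the three fixed quadratic characters have any real zero at distance $\ge c(a,b)>0$ from $1$, so $\mathrm{Li}(x^{\beta_1})\ll x^{1-\min(\delta,\,c(a,b))}$; thus the whole error is $\ll\tfrac{|\cC|}{|G|}\tfrac{x}{(\log x)^2}$ in this range. Since $\delta=\ep_0/(42+4\ep_0)<\tfrac{1}{4}$ by (\ref{delta}), we have $1/(2\delta)>2$, so for $D_{\widetilde K}\ge C_7$ the window $W$ and the unconditional range overlap, and together they cover every $x\ge\kappa_1\exp\big[\kappa_2(\log\log(D_{\widetilde K}^{\kappa_3}))^2\big]$; this proves (\ref{bnd}).

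The explicit-formula and partial-summation bookkeeping and the classical input for the fixed $L$-factors are routine. The point that has to be arranged -- and which explains why the vertical extent of the box in (\ref{azfr2}) is taken as large as $(\log D_{\widetilde K})^{2/\delta}$ -- is the overlap of the two $x$-ranges: the truncation term in the explicit formula imposes $\log x\ll(\log D_{\widetilde K})^{1/(2\delta)}$ while the unconditional theorem needs $\log x\gg(\log D_{\widetilde K})^{2}$, and these are compatible precisely because $\delta<\tfrac{1}{4}$, with the remaining slack absorbed into $C_7$ and the parameters $\kappa_i$. The only other points needing care are the verification that $L(s,\rho_{\widetilde K})$ is entire with analytic conductor $\asymp_Q D_{\widetilde K}^{1/2}$, and that the possible Landau--Siegel zero of $\zeta_{\widetilde K}(s)$ is accounted for in full by the hypothesis (\ref{azfr2}) together with the three fixed quadratic $L$-factors.
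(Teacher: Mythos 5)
Your proposal is correct in substance and shares the paper's overall skeleton: exploit that in the factorization (\ref{l}) only $L(s,\rho_{\widetilde K})$ varies, run the Lagarias--Odlyzko explicit-formula machinery with truncation height $T=(\log D_{\widetilde K})^{2/\delta}$ for moderate $x$, invoke the unconditional Lagarias--Odlyzko theorem for $x\ge\exp(c(\log D_{\widetilde K})^2)$, and check the two ranges overlap because $\delta$ is small. Where you genuinely diverge is the handling of the three fixed quadratic factors. The paper does not decompose $\psi_\cC$ by character orthogonality; it keeps all factors of $\zeta_{\widetilde K}/\zeta$ in one zero sum, intersects the hypothesized box (\ref{azfr2}) with the classical zero-free region of the fixed Dirichlet $L$-functions (passing to the right of $\beta_{\max}$, which is why $\ep_0$ must be small enough that $\delta<1-\beta_{\max}$), and then quotes the $S(x,T)$ and $E_1,\dots,E_5$ analysis of \cite{PTW17} via Theorem 7.1 of \cite{LO75}, modifying only the term $E_5$, whose width $\cL(T)=C_0/\log q_{\max}(T+3)$ shrinks with $T$ and is precisely what forces the threshold $x\ge\kappa_1\exp[\kappa_2(\log\log(D_{\widetilde K}^{\kappa_3}))^2]$ and the dependence of $\kappa_3$ on $q_{\max}$. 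You instead split $\psi_\cC$ into the five $\psi(x,\rho)$ and dispose of the fixed quadratic characters by the fixed-modulus prime number theorem (with a fixed Siegel zero bounded away from $1$), so your only $D_{\widetilde K}$-dependent constraint from the zero sum is $\log x\gg_\delta\log\log D_{\widetilde K}$; this is cleaner and in fact yields a slightly wider admissible range than the stated one (which your argument of course still covers), at the cost of redoing the explicit-formula bookkeeping that the paper simply imports from \cite{PTW17}. Two further small contrasts: you make explicit the possible $\mathrm{Li}(x^{\beta_1})$ term in the unconditional Lagarias--Odlyzko range and argue it away using the hypothesis plus the fixedness of the quadratic characters, a point the paper passes over silently; and your sketch should, in a full write-up, also account for zeros with small $|\beta+i\gamma|$ in the truncated explicit formula (the analogue of the paper's $E_3$ term $x^{1/2}(\log D_{\widetilde K})^2$), which is routine since $\delta<\tfrac12$.
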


Theorem \ref{cdtzf} is analogous to Theorem 3.1 in \cite{PTW17} and we will prove Theorem \ref{cdtzf} mainly by an adaptation of the proof of Theorem 3.1 in \cite{PTW17}.

We show via work of Kowalski and Michel in \cite{KM02} that almost all fields in our family are zero-free in the described region.

\begin{thm}\label{qwe}
Suppose that $Q=\Q(\sqrt{a},\sqrt{b})$ is such that $\cF_4(Q) \neq \emptyset$. For every $0<\ep_0<\frac14$, there are $\ll_{\ep_0} X^{\ep_0}$ fields $K \in \cF_4(Q;X)$ such that $\zeta_{\widetilde{K}}(s)/\zeta_Q(s)=L^2(s,\rho_{\widetilde{K}})$ could have a zero in the region (\ref{azfr2}).
\end{thm}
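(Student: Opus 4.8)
The plan is to interpret $L(s,\rho_{\widetilde K})$ as a member of a family of Hecke $L$-functions over a \emph{fixed} quadratic field and then to quote the family zero-density estimate of Kowalski and Michel \cite{KM02}; this is the analogue for $\cF_4(Q)$ of the corresponding step in \cite{PTW17}. First I would make the group-theoretic reduction. The $2$-dimensional faithful representation $\rho_{\widetilde K}$ of $D_4$ is monomial: it is induced from a faithful (hence order-$4$) character $\chi_K$ of the unique cyclic subgroup $C_4\le D_4$. Since $C_4$ is characteristic and contains $Z(D_4)$, its fixed field $k_0:=\widetilde K^{C_4}$ is a quadratic subfield of $Q=\widetilde K^{Z(D_4)}$, namely the fixed field of the image of $C_4$ in $\Gal(Q/\Q)$, and hence is the \emph{same} field for every $K\in\cF_4(Q)$. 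By the inductive property of Artin $L$-functions, $L(s,\rho_{\widetilde K})=L(s,\psi_K)$, where $\psi_K$ is the ray class character of $k_0$ of exact order $4$ cut out by $\widetilde K/k_0$. Thus $\{L(s,\rho_{\widetilde K}):K\in\cF_4(Q)\}$ becomes a family of Hecke $L$-functions of $k_0$: each is entire (since $\psi_K\neq 1$), has a degree-two Euler product and a functional equation, and satisfies the Ramanujan bound trivially, while the number of quartic ray class characters of $k_0$ of conductor of norm $\le Y$ is $\ll_{k_0,\ep}Y^{1+\ep}$. Applying the conductor--discriminant formula to the factorization \eqref{l} (and the analogous one for $\zeta_K$), and using that $Q$ is fixed, gives $D_K\asymp_Q D_{\widetilde K}^{1/2}\asymp_Q N_{k_0/\Q}\mathfrak f(\psi_K)$, so each $K\in\cF_4(Q;X)$ produces a $\psi_K$ of conductor norm $\ll_Q X$, and the fibres of $K\mapsto\psi_K$ (equivalently $K\mapsto\widetilde K$) have size at most $2$.

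Next I would apply \cite{KM02}. Their family zero-density bound, applied to $\{\psi_K\}$, shows that for $\sigma$ close to $1$ the number of $\psi_K$ for which $L(s,\psi_K)$ has a zero in $[\sigma,1]\times[-\mathcal Q,\mathcal Q]$ is $\ll_\ep \mathcal Q^{c(1-\sigma)+\ep}$, where $c$ is an explicit absolute constant and $\mathcal Q$ bounds the analytic conductors in the subfamily. One may take $\mathcal Q\ll_Q X^{1+\ep}$: the conductor norm of $\psi_K$ is $\ll_Q X$, and the height $(\log D_{\widetilde K})^{2/\delta}$ appearing in \eqref{azfr2} is smaller than any fixed power of $X$, so the box \eqref{azfr2} lies well inside the region \cite{KM02} controls. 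Setting $\sigma=1-\delta$ then bounds the number of exceptional characters, and therefore---after multiplying by the fibre bound---the number of exceptional fields $K\in\cF_4(Q;X)$ by $\ll_\ep X^{c(1+\ep)\delta+\ep}$. The definition of $\delta$ in \eqref{delta}, in particular the constant $42$, is precisely what is needed to force this exponent below $\ep_0$; since a zero of $L^2(s,\rho_{\widetilde K})=\zeta_{\widetilde K}(s)/\zeta_Q(s)$ in \eqref{azfr2} is a zero of $L(s,\rho_{\widetilde K})=L(s,\psi_K)$ there, this yields the claimed bound $\ll_{\ep_0}X^{\ep_0}$.

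The hard part will be checking that $\{\psi_K\}$ genuinely satisfies the hypotheses of the family zero-density theorem of \cite{KM02}---bounded degree, polynomially bounded conductor counting, and the standard analytic axioms---and confirming that the region \eqref{azfr2} for $L(s,\rho_{\widetilde K})$ is contained in the region for which \cite{KM02} outputs its density bound, which comes down to comparing the height $(\log D_{\widetilde K})^{2/\delta}$ with the conductor $N_{k_0/\Q}\mathfrak f(\psi_K)\asymp_Q D_K$. Everything else is bookkeeping: the two-sided comparison of $D_K$, $D_{\widetilde K}$, and $N_{k_0/\Q}\mathfrak f(\psi_K)$ through conductor--discriminant (with all implicit constants depending only on the fixed $Q$), the observation that $K\mapsto\psi_K$ has finite fibres of bounded size so that counting characters dominates counting fields, and back-solving the Kowalski--Michel exponent to pin down the numerology of \eqref{delta}. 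This is exactly the place where fixing $Q$---as suggested in Remark 6.12 of \cite{PTW17}---pays off: only because $k_0$ is a single fixed quadratic field is $\{\psi_K\}$ a family of the shape that \cite{KM02} can handle, with conductors growing like $D_K$.
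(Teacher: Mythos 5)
Your overall route is essentially the paper's: both arguments isolate the single varying factor $L(s,\rho_{\widetilde K})$, view it as a cuspidal degree-two automorphic $L$-function, verify the Ramanujan, conductor, counting and convexity axioms, and then invoke Theorem 2 of \cite{KM02} with the parameters tuned so that the zero-density exponent at $\alpha=1-\delta$, with $\delta$ as in (\ref{delta}), comes out $\le\ep_0$ (the paper takes $c_0=21+\ep_0$, $T=X^{\ep_0/(2B)}$, conductor exponent $A=4$ from $D_{\widetilde K}\ll D_K^4$, and treats small $X$ trivially). The only real difference is how automorphy is supplied: the paper cites the strong Artin conjecture for dihedral groups (\cite{Lan80}, together with Proposition 2.1 of \cite{Mar03}) to get cuspidal representations of $\GL_2(\mathbb{A}_\Q)$, whereas you make the dihedral structure explicit by writing $\rho_{\widetilde K}$ as induced from an order-$4$ Hecke character $\psi_K$ of a quadratic subfield; that buys you the sharper conductor relation $\mathrm{Cond}(\rho_{\widetilde K})\asymp_Q D_K\asymp_Q D_{\widetilde K}^{1/2}$, which only makes the Kowalski--Michel numerology easier, but it is the same application of \cite{KM02}.

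There is, however, one incorrect claim in your reduction: the field $k_0=\widetilde K^{C_4}$ is \emph{not} the same quadratic field for every $K\in\cF_4(Q)$. The subgroup $\langle r\rangle$ is indeed characteristic in $D_4$, but which order-two subgroup of $\Gal(Q/\Q)$ it maps to depends on $\widetilde K$, not just on $Q$: concretely, $k_0$ is the quadratic subfield of $Q$ over which $\widetilde K$ is cyclic of degree $4$, i.e.\ the third quadratic field complementary to the two extended quadratic fields of $K$, and this varies across the decomposition $\cF_4(Q)=\cF_4(a,b)\sqcup\cF_4(a,\tfrac{ab}{\xi^2})\sqcup\cF_4(b,\tfrac{ab}{\xi^2})$ of Section \ref{core}. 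For instance, with $Q=\Q(\sqrt2,\sqrt7)$, the field $\Q(\sqrt{3+\sqrt2})$ (from $9-2=7$) has $k_0=\Q(\sqrt{14})$, while $\Q(\sqrt{4+\sqrt2})$ (from $16-2=14$) lies in the same $\cF_4(Q)$ but has $k_0=\Q(\sqrt7)$. The repair is immediate and does not change your strategy: partition $\cF_4(Q;X)$ into the three subfamilies, run the Hecke-character argument over each of the (at most three) fixed quadratic fields $k_0\in\{\Q(\sqrt a),\Q(\sqrt b),\Q(\tfrac{\sqrt{ab}}{\xi})\}$, and add the three bounds $\ll_{\ep_0}X^{\ep_0}$. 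A second, harmless, slip: in the paper's counting convention four quartic fields of $\overline{\Q}$ (two up to isomorphism) share a given Galois closure, so the fibres of $K\mapsto L(s,\rho_{\widetilde K})$ have size $4$ rather than $2$; this only affects an absolute constant.
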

 
Hence we obtain our third main result, an effective Chebotarev density theorem for our family $\cF_4(Q)$.
 
\begin{thm}\label{main}
Suppose that $Q=\Q(\sqrt{a},\sqrt{b})$ is such that $\cF_4(Q) \neq \emptyset$. For every $0<\ep_0 < \frac14$ sufficiently small, there exists a constant $B_2=B_2(\ep_0)$ such that for every $X \ge 1$, aside from at most $B_2X^{\ep_0}$ fields in $\cF_4(Q;X)$, each field $K \in \cF_4(Q;X)$ has the property that for every conjugacy class $\cC \subset G=D_4$,
\begin{equation}\label{bnd}
\left| \pi_{\cC}(x,\widetilde{K}/\Q)-\frac{|\cC|}{|G|}\mathrm{Li}(x) \right| \le \frac{|\cC|}{|G|} \frac{x}{(\log x)^2}
\end{equation}
for all
\begin{equation}\label{bnd2}
x \ge \kappa_1 \exp{[\kappa_2(\log \log(D_{\widetilde{K}}^{\kappa_3}))^{2}]} 
\end{equation}
for parameters $\kappa_i=\kappa_i(a,b,\ep_0)$.
\end{thm}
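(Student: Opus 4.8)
The plan is to obtain Theorem \ref{main} by stitching together Theorem \ref{cdtzf} and Theorem \ref{qwe}: the former upgrades an assumed zero-free region for $L(s,\rho_{\widetilde K})$ into the effective Chebotarev estimate $(\ref{bnd})$, while the latter shows that only $\ll_{\ep_0} X^{\ep_0}$ fields in $\cF_4(Q;X)$ fail to have such a zero-free region. All that remains is to package the exceptional sets and handle the finitely many fields of small discriminant.

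Fix $0<\ep_0<\frac14$ sufficiently small that Theorem \ref{cdtzf} applies with $\delta=\frac{\ep_0}{42+4\ep_0}$, and let $\kappa_i=\kappa_i(a,b,\ep_0)$ be the parameters it produces. By Theorem \ref{qwe} there is a constant $C=C(\ep_0)$ so that, for every $X\ge 1$,
\[
E_1(X):=\bigl\{K\in\cF_4(Q;X):\ L^2(s,\rho_{\widetilde K})\ \text{has a zero in the region }(\ref{azfr2})\bigr\}
\]
has $|E_1(X)|\le CX^{\ep_0}$. Let $E_0:=\{K\in\cF_4(Q):\ D_{\widetilde K}<C_7\}$, with $C_7$ the constant from Theorem \ref{cdtzf}; since $\widetilde K$ then ranges over number fields of degree $8$ with bounded discriminant, $E_0$ is finite ($Q$ being fixed throughout). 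Put $E(X):=E_0\cup E_1(X)$ and choose $B_2=B_2(\ep_0):=C+|E_0|$, so that $|E(X)|\le |E_0|+CX^{\ep_0}\le B_2X^{\ep_0}$ for all $X\ge 1$.

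Now let $K\in\cF_4(Q;X)\setminus E(X)$. Because $K\notin E_0$ we have $D_{\widetilde K}\ge C_7$, and because $K\notin E_1(X)$ the quotient $\zeta_{\widetilde K}(s)/\zeta_Q(s)=L^2(s,\rho_{\widetilde K})$, hence $L(s,\rho_{\widetilde K})$ itself, is zero-free in $(\ref{azfr2})$. Thus $K$ meets every hypothesis of Theorem \ref{cdtzf}, and for each conjugacy class $\cC\subset G=D_4$ the bound $(\ref{bnd})$ holds for all $x$ in the range $(\ref{bnd2})$ with the parameters $\kappa_i$ above. This is precisely the assertion of Theorem \ref{main}.

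The genuine difficulty is not in this assembly but in the two inputs. Theorem \ref{cdtzf} requires carrying the Lagarias--Odlyzko style explicit-formula argument through in the setting where the only ``moving'' factor of $\zeta_{\widetilde K}(s)$ is $L(s,\rho_{\widetilde K})$ --- the fixed factor $\zeta_Q(s)$ contributing only harmless, bounded error --- and tracking the interplay between the width $\delta$ of the assumed zero-free box and the resulting admissible range of $x$; this is where the specific value $\delta=\frac{\ep_0}{42+4\ep_0}$ and the $\kappa_i$ come from. Theorem \ref{qwe} is the subtler ingredient: it needs a log-free zero-density estimate for the family $\{L(s,\rho_{\widetilde K})\}_{K\in\cF_4(Q)}$, extracted from the work of Kowalski and Michel in \cite{KM02}, which presupposes organizing the $L(s,\rho_{\widetilde K})$ as a well-behaved family (conductors controlled in terms of $D_{\widetilde K}$, distinct fields giving essentially distinct $L$-functions, a usable ordering) and then summing the density bound over the family to land the exponent $\ep_0$. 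Neither of these is invoked again here; Theorem \ref{main} is exactly the clean combination of the two.
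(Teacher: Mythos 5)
Your proposal is correct and follows exactly the route the paper takes: the paper simply states that Theorem \ref{main} is a direct consequence of Theorem \ref{cdtzf} and Theorem \ref{qwe}, which is the combination you carry out. Your explicit treatment of the finitely many fields with $D_{\widetilde K}<C_7$ (absorbed into $B_2$, using $X\ge 1$) is a small point the paper leaves implicit, and it is handled correctly.
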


Theorem \ref{main} is a direct consequence of Theorem \ref{cdtzf} and Theorem \ref{qwe}. The error term in the Chebotarev density theorem (Theorem \ref{main}) can be improved by considerations of Brumley, Thorner, and Zaman, see \cite{TZ18}, but as this is not needed for the application of Theorem \ref{tor}, we do not pursue this here.

Taking Theorem \ref{vital} and Theorem \ref{main} together, we know that almost all fields $K \in \cF_4(Q;X)$ have the property that for every conjugacy class $\cC \subset G=D_4$, (\ref{bnd}) and (\ref{bnd2}) hold.

\end{section}

\begin{section}{Motivation for the construction of the family $\cF_4(Q)$}\label{mot}

We begin by describing the family $\cF_4(Q)$ and in particular why in this setting we can carry out the approach of \cite{PTW17}.

For a $D_4$-quartic field $K$, we consider all the subextensions of $\widetilde{K}$. In order to understand the relations among $K$, $\widetilde{K}$, and their subfields, we are led by Galois theory to consider all the subgroups of $D_4$. We write $D_4=\langle r,s \ | \ r^4=1, s^2=1, srs^{-1}=r^{-1} \rangle$ and then have the following diagrams.

\noindent Lattice of groups:
\[
\xymatrix{
 & & 1 \ar@{-}[lld]  \ar@{-}[ld] \ar@{-}[d] \ar@{-}[dr] \ar@{-}[drr] & \\
 <s> \ar@{-}[dr] & <r^2s> \ar@{-}[d] & <r^2> \ar@{-}[ld] \ar@{-}[d] \ar@{-}[dr] & <rs> \ar@{-}[d] & <r^3s> \ar@{-}[ld] \\
 & <r^2,s> \ar@{-}[dr] & <r> \ar@{-}[d] & <r^2,rs> \ar@{-}[ld] \\
 & & <r,s> & & }
\]
Lattice of fields:
\[
\xymatrix{
 & & \widetilde{K} \ar@{-}[lld] \ar@{-}[ld] \ar@{-}[d] \ar@{-}[dr] \ar@{-}[drr] & & \\
K_1 \ar@{-}[dr] & K_2 \ar@{-}[d] & Q \ar@{-}[ld] \ar@{-}[d] \ar@{-}[dr] & K_3  \ar@{-}[d] & K_4 \ar@{-}[ld] \\
 & F_2 \ar@{-}[dr] & F_1 \ar@{-}[d] & F_3 \ar@{-}[ld] \\
 & & \mathbb{Q} & & }
\]
In the lattice of fields, the set of fields $\{F_1, F_2, F_3\}$ is equal to the set $\{\mathbb{Q}(\sqrt{a}), \mathbb{Q}(\sqrt{b}), \mathbb{Q}(\frac{\sqrt{ab}}{\xi})\}$, where $\xi=\mathrm{gcd}(|a|,|b|)$.

At a key step in \cite{PTW17}, for each fixed group $G$, the authors provide a way to control the number of $G$-fields whose Galois closures share a certain fixed field. In detail, by specifying an appropriate restriction on the ramification type of tamely ramified primes, one can impose that if the primes divide $D_K$, then they divide $D_F$. Here $F=\widetilde{K}^H$, where $H$ is allowed to be the kernel of any irreducible representation of the Galois group $G$. In the case $G=D_4$, this cannot be done, since there is no restriction on ramification type satisfying the requirement above. We illustrate this point with the following table (recall that $D_4=\langle r,s \ | \ r^4=1, s^2=1, srs^{-1}=r^{-1} \rangle$). 

In this table, $p$ is an odd prime. This makes $p$ unramified or tamely ramified since $p \nmid |D_4|$ (see Lemma 6.10 of \cite{PTW17}). Hence, the inertia group of $p$ is cyclic. The first column is the conjugacy class of a generator for the cyclic inertia group of $p$. In the first row, $\exp_p(D_K)$ denotes the exponent $\alpha$ such that $p^\alpha || D_K$, and $F_1,F_2,F_3$ are the same as depicted in the lattice of fields. Note that all of the fields $F_1,F_2,F_3$ are of the form $K^H$, where $H$ varies over the kernels of the irreducible representations of the Galois group $G$.

\begin{table}[!ht]\label{table2.3.1}
\begin{tabular}{|p{39mm}|p{17mm}|p{17mm}|p{17mm}|p{17mm}|p{17mm}|}
\hline
$\text{Ramification type of } p$ & $\exp_p(D_K)$ & $\exp_p(D_{\widetilde{K}})$ & $\exp_p(D_{F_1})$ & $\exp_p(D_{F_2})$ & $\exp_p(D_{F_3})$ \\
\hline
$[1]$ & $0$ & $0$ & $0$ & $0$ & $0$ \\
\hline
$[r^2]$ & 2 & 4 & 0 & 0 & 0 \\
\hline
$[s]$ & 1 & 4 & 1 & 0 & 1 \\ 
\hline
$[r^3s]$ & 2 & 4 & 1 & 1 & 0 \\
\hline
$[r]$ & 3 & 6 & 0 & 1 & 1 \\
\hline
\end{tabular}
\end{table}

From the table we know that whatever ramification type (or collection of ramification types) we choose, there are primes $p$ such that $p \mid D_K$ but $p \nmid D_F$ for a field $F=K^H$. Therefore, there is no suitable restriction on ramification type for the method presented in \cite{PTW17}. This motivates our definition of the family $\cF_4(Q)$, which effectively removes consideration of the last three columns of this table. 

\begin{rem}\label{suc}
\normalfont
Once the biquadratic field $Q=\Q(\sqrt{a},\sqrt{b})$ has been fixed, the $L$-functions $L(s,\chi_{a^\ast})$, $L(s,\chi_{b^\ast})$, and $L(s,\chi_{({\frac{ab}{\xi^2}})^\ast})$ are fixed. Our method is to pass to the right of possible real simple exceptional zeros of these three $L$-functions. When $K$ varies in $\cF_4(Q;X)$, we obtain a zero-free region (\ref{azfr2}) for almost all functions $L(s,\rho_{\widetilde{K}})$. Then we consider the intersection of the zero-free regions of $L(s,\rho_{\widetilde{K}})$ and $L(s,\chi_{a^\ast})L(s,\chi_{b^\ast})L(s,\chi_{({\frac{ab}{\xi^2}})^\ast})$. We are able to obtain a zero-free region of $\zeta_{\widetilde{K}}(s)/\zeta(s)$ (see (\ref{7z})) for almost all fields $K \in \cF_4(Q;X)$. Based on the zero-free region of $\zeta_{\widetilde{K}}(s)/\zeta(s)$, we obtain an effective Chebotarev density theorem (Theorem \ref{main}) and a theorem on $\ell$-torsion of class groups (Theorem \ref{tor}).
\end{rem}

We also remark that the paper \cite{EPW17} gives a nontrivial bound for $\ell$-torsion in class groups of non-$D_4$ quartic fields. The obstacle in the $D_4$ case lies in the counting problem for $D_4$-quartic fields with local conditions, which has non-multiplicative local densities; on the other hand, see (2.14), (2.15) of \cite{EPW17} on the counting for non-$D_4$ quartic fields.

\end{section}

\begin{section}{Counting $D_4$-quartic fields with a fixed biquadratic field $Q$}\label{core}

In this section, the problem that interests us is a lower bound of $|\cF_4(Q;X)|$ as $X \to \infty$, provided that $\cF_4(Q) \neq \emptyset$. The aim of this section is to describe a new explicit construction for this problem. 

We first state all the necessary lemmas and propositions, and then turn to the proofs.
As before, $Q=\Q(\sqrt{a},\sqrt{b})$ is a biquadratic field, where $a,b$ are distinct square-free integers not equal to $0$ or $1$.

\begin{lem}\label{a}
For $K \in \cF_4(Q)$, there is a unique quadratic subfield of $K$. Moreover, $\widetilde{K}$ only contains $D_4$-quartic fields that are extensions of two quadratic subfields of $\widetilde{K}$, but not of the third quadratic subfield.
\end{lem}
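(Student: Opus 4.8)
The plan is to translate both assertions, via the Galois correspondence for $\widetilde K/\Q$, into elementary statements about the subgroup lattice of $G=D_4=\langle r,s\mid r^4=1,\ s^2=1,\ srs^{-1}=r^{-1}\rangle$ recorded in Section \ref{mot}. First I would recall that, since $[K:\Q]=4$, we have $K=\widetilde K^H$ for a subgroup $H\le G$ of index $4$, i.e.\ $|H|=2$; and that, since the Galois closure of $K=\widetilde K^H$ is $\widetilde K$ itself, $H$ contains no nontrivial normal subgroup of $G$, that is $\bigcap_{g\in G}gHg^{-1}=1$. As the unique normal subgroup of order $2$ in $D_4$ is the center $Z(G)=\langle r^2\rangle$, this forces $H$ to be one of the four non-central order-$2$ subgroups $\langle s\rangle$, $\langle r^2s\rangle$, $\langle rs\rangle$, $\langle r^3s\rangle$; conversely each of these has trivial core, and the corresponding fixed fields are the fields $K_1,\dots,K_4$ of the lattice in Section \ref{mot}.

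For the uniqueness of the quadratic subfield, I would note that quadratic subfields of $K=\widetilde K^H$ correspond under the Galois correspondence to subgroups $L$ with $H\le L\le G$ and $[G:L]=2$, i.e.\ $|L|=4$. The order-$4$ subgroups of $D_4$ are precisely $\langle r\rangle$, $\langle r^2,s\rangle=\{1,r^2,s,r^2s\}$, and $\langle r^2,rs\rangle=\{1,r^2,rs,r^3s\}$; the cyclic group $\langle r\rangle$ contains no reflection, while the two Klein four-groups partition the four reflections into the pairs $\{s,r^2s\}$ and $\{rs,r^3s\}$. Hence for each of the four admissible choices of $H$ there is exactly one order-$4$ subgroup containing it, so $K$ has exactly one quadratic subfield.

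For the ``moreover'' part, the three quadratic subfields of $\widetilde K$ are the fixed fields of the three index-$2$ subgroups $\langle r\rangle=F_1$, $\langle r^2,s\rangle=F_2$, $\langle r^2,rs\rangle=F_3$ (all of which contain the commutator subgroup $\langle r^2\rangle$, so all three sit inside $Q$). By the containments computed above, $\widetilde K^{\langle s\rangle}$ and $\widetilde K^{\langle r^2s\rangle}$ have unique quadratic subfield $F_2$, and $\widetilde K^{\langle rs\rangle}$ and $\widetilde K^{\langle r^3s\rangle}$ have unique quadratic subfield $F_3$, whereas no $D_4$-quartic subfield of $\widetilde K$ contains $F_1$ --- precisely because $\langle r\rangle$ is cyclic, so its only subgroup of order $2$ is the normal subgroup $\langle r^2\rangle$, which never occurs as an admissible $H$. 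This is exactly the statement that the $D_4$-quartic subfields of $\widetilde K$ are extensions of two of the three quadratic subfields of $\widetilde K$ (namely $F_2$ and $F_3$), but not of the third ($F_1$).

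I do not anticipate a genuine obstacle here: the whole argument is a finite check in the subgroup lattice of $D_4$. The only points that require care are the standard equivalence ``the Galois closure of $\widetilde K^H$ equals $\widetilde K$ $\iff$ $H$ has trivial core in $G$'' and verifying that the enumerations of the order-$2$ and order-$4$ subgroups of $D_4$, together with their mutual containments, are complete --- both of which are immediate from the presentation of $D_4$ recalled in Section \ref{mot}.
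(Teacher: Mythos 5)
Your argument is correct and is essentially the paper's own: the paper simply declares Lemma \ref{a} a direct corollary of the lattice of groups/fields for $D_4$ from Section \ref{mot}, and your proof just makes that Galois-correspondence check explicit (core-free order-$2$ subgroups, their unique order-$4$ overgroups, and the exclusion of $\langle r\rangle$). No gaps; you have merely written out the finite lattice verification the paper leaves implicit.
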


Lemma \ref{a} is a direct corollary of the lattice of fields, whose notation we now adopt. If we consider the $D_4$-quartic field $K_1$, then its unique quadratic subfield is $F_2$. Moreover, $\widetilde{K_1}$ only contains $D_4$-quartic fields that are extensions of $F_2$ and $F_3$, but not of $F_1$. In this case, we say that $F_2$ and $F_3$ are the only two \emph{extended quadratic} fields of $K_1$.

With Lemma \ref{a} in hand, we define 
\begin{equation}
\cF_4(a,b)=\{ K \in \cF_4(Q): \Q(\sqrt{a}), \Q(\sqrt{b}) \text{ are the only two extended quadratic fields of } K \}.
\end{equation} 
Similarly, we define $\cF_4(a,\frac{ab}{\xi^2})$ and $\cF_4(b,\frac{ab}{\xi^2})$, where $\xi=\mathrm{gcd}(|a|,|b|)$. Then we have
\begin{equation*}
\cF_4(Q)=\cF_4(a,b) \sqcup \cF_4(a,\frac{ab}{\xi^2}) \sqcup \cF_4(b,\frac{ab}{\xi^2})
\end{equation*}
since for any field in $\cF_4(Q)$, its extended quadratic fields are $\Q(\sqrt{a})$ and $\Q(\sqrt{b})$, $\Q(\sqrt{a})$ and $\Q(\frac{\sqrt{ab}}{\xi})$, or $\Q(\sqrt{b})$ and $\Q(\frac{\sqrt{ab}}{\xi})$.

For $c \in \{a,b\}$, we are able to define a subset $\cF_{4,c}(a,b)$ of $\cF_4(a,b)$ by
\begin{equation*}
\cF_{4,c}(a,b)=\{K \in \cF_4(a,b): \Q(\sqrt{c}) \text{ is the unique quadratic subfield of } K\}
\end{equation*}
and a subset $\cF_{4,c}(a,b;X)$ of $\cF_{4,c}(a,b)$ by
\begin{equation*}
\cF_{4,c}(a,b;X)=\cF_4(Q;X) \cap \cF_{4,c}(a,b).
\end{equation*}
It is clear that
\begin{equation*}
\cF_{4}(a,b)=\cF_{4,a}(a,b) \sqcup \cF_{4,b}(a,b).
\end{equation*}
Similarly as in our definitions of two refined subfamilies inside $\cF_4(a,b)$, we have definitions of two refined subfamilies inside each of $\cF_4(a,\frac{ab}{\xi^2})$ and $\cF_4(b,\frac{ab}{\xi^2})$. Hence, we have
\begin{equation}\label{six}
\cF_4(Q)=\cF_{4,a}(a,b) \sqcup \cF_{4,b}(a,b) \sqcup \cF_{4,a}(a,\frac{ab}{\xi^2}) \sqcup \cF_{4,\frac{ab}{\xi^2}}(a,\frac{ab}{\xi^2}) \sqcup \cF_{4,b}(b,\frac{ab}{\xi^2}) \sqcup \cF_{4,\frac{ab}{\xi^2}}(b,\frac{ab}{\xi^2}).
\end{equation}
Thus in order to give a lower bound on $|\cF_4(Q;X)|$, it will suffice to give a lower bound on one of these six subfamilies. From now on, we focus on $\cF_{4,a}(a,b)$, but the results apply to the other five subfamilies as well.

We have the following result that gives a generator for $K \in \cF_{4,a}(a,b)$.

\begin{lem}\label{cda}
For $K \in \cF_{4,a}(a,b)$, there exists $g \in \Z$, $h \in \Z^*=\Z-\{0\}$ such that $K=\Q(\sqrt{g+h\sqrt{a}})$.


\end{lem}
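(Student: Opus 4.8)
The plan is to realize $K$ as a Kummer quadratic extension of its unique quadratic subfield and then clear denominators, the only substantive point being that the resulting generator must genuinely involve $\sqrt a$.

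First I would record the relevant structure. Since $K \in \cF_{4,a}(a,b)$, Lemma \ref{a} together with the definition of $\cF_{4,a}(a,b)$ says that $\Q(\sqrt a)$ is \emph{the} quadratic subfield of $K$; in particular $\Q(\sqrt a) \subset K$ and $[K:\Q(\sqrt a)] = 2$. Because $\Q(\sqrt a)$ has characteristic $0$ it contains $\mu_2 = \{\pm 1\}$, so Kummer theory for quadratic extensions applies: there is an element $\alpha \in \Q(\sqrt a)^\times$, necessarily not a square in $\Q(\sqrt a)^\times$, with $K = \Q(\sqrt a)(\sqrt\alpha)$.

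Next I would make $\alpha$ have integer coordinates. Writing $\alpha = (A + B\sqrt a)/C$ with $A, B, C \in \Z$ and $C \neq 0$, and replacing $\alpha$ by $C^2\alpha = CA + CB\sqrt a$ — which changes neither the square class of $\alpha$ in $\Q(\sqrt a)^\times$ nor the field $\Q(\sqrt a)(\sqrt\alpha)$ — I may assume $\alpha = g + h\sqrt a$ with $g, h \in \Z$. Then $K = \Q(\sqrt{g + h\sqrt a})$, and it only remains to check $h \neq 0$.

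This last step is where the hypothesis that $K$ is a $D_4$-quartic field, as opposed to a biquadratic one, is used, and it is the only place any care is needed. If $h = 0$, then $\alpha = g \in \Q^\times$, and since $\alpha$ is not a square in $\Q(\sqrt a)^\times$ we would have $K = \Q(\sqrt a, \sqrt g)$ with $[K:\Q] = 4$ and $\Gal(K/\Q) \cong (\Z/2\Z)^2$. In particular $K/\Q$ would be Galois, forcing $K = \widetilde K$ and $\Gal(\widetilde K/\Q) \cong (\Z/2\Z)^2$, contradicting $\Gal(\widetilde K/\Q) \cong D_4$. Hence $h \in \Z - \{0\}$, which completes the argument. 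No genuine obstacle arises here; the proof is essentially Kummer theory plus the observation that the $D_4$ condition forbids the Kummer generator from lying in $\Q$.
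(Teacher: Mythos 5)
Your proposal is correct and follows essentially the same route as the paper: write $K$ as a quadratic (Kummer) extension of its unique quadratic subfield $\Q(\sqrt a)$, clear denominators by multiplying the generator by a square, and use the $D_4$ hypothesis ($K \neq \widetilde K$) to exclude $h=0$. The only cosmetic difference is the order of steps — strictly you first get $K=\Q(\sqrt a)(\sqrt{g+h\sqrt a})$, and the equality $K=\Q(\sqrt{g+h\sqrt a})$ uses the (one-line) observation that $h\neq 0$ forces $\sqrt a=\bigl((\sqrt{g+h\sqrt a})^2-g\bigr)/h\in\Q(\sqrt{g+h\sqrt a})$, which matches the paper's closing remark that this field is a quartic subextension of $K$.
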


Assuming Lemma \ref{cda}, we give explicit criteria for $\cF_{4,a}(a,b)$ to be nonempty. 

\begin{prop}\label{cond}
Assume that $Q=\Q(\sqrt{a},\sqrt{b})$ is a biquadratic extension of $\Q$. Then $\cF_{4,a}(a,b) \neq \emptyset$ if and only if $b$ is a norm of $\Q(\sqrt{a})$.










Moreover, under the condition $\cF_{4}(Q) \neq \emptyset$, there exists a well-defined function $\varphi: (a,b) \mapsto (g_0,h_0,n_0)$, the image being an ordered triple of positive integers satisfying a certain equation (see (\ref{ghnn}) if $\cF_{4,a}(a,b) \neq \emptyset$ and 
(\ref{sub1}) (\ref{sub2}) if $\cF_{4,a}(a,b)=\emptyset$). The triple depends only on the ordered pair $(a,b)$. 
\end{prop}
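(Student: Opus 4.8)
The plan is to reduce the whole statement to an explicit generator, then to elementary Galois theory of quartics of the form $x^4+px^2+q$, and finally to the solvability of a ternary conic. By Lemma~\ref{cda}, every $K\in\cF_{4,a}(a,b)$ can be written $K=\Q(\sqrt{g+h\sqrt a})$ with $g\in\Z$, $h\in\Z^{\ast}$; set $\alpha=\sqrt{g+h\sqrt a}$, so $\alpha$ is a root of $x^4-2gx^2+q$ with $q:=g^2-ah^2$, the conjugate roots are $\pm\alpha,\pm\beta$ with $\beta^2=g-h\sqrt a$ and $\alpha\beta=\sqrt q$, and $\widetilde K=\Q(\alpha,\sqrt q)$.

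I would first recall the standard classification: $x^4-2gx^2+q$ is irreducible and $\Gal(\widetilde K/\Q)\cong D_4$ precisely when neither $q$ nor $aq$ lies in $(\Q^{\times})^2$ (and then $q\in(\Q^{\times})^2$ gives a group contained in $V_4$, while $aq\in(\Q^{\times})^2$ gives a cyclic group of order $4$). Next I would compute the action of the generator $r$ of the cyclic subgroup $\langle r\rangle\leq D_4$ on the roots: $r$ negates both $\sqrt a$ and $\sqrt q$, hence fixes $\sqrt{aq}$, so the quadratic subfield $F_1=\widetilde K^{\langle r\rangle}$ — which, by the lattice in Section~\ref{mot}, is the unique quadratic subfield of $\widetilde K$ not contained in any $D_4$-quartic subfield — equals $\Q(\sqrt{aq})$. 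Consequently the extended quadratic fields of $K$ are exactly $\Q(\sqrt a)$ and $\Q(\sqrt q)$, and since $\Q(\sqrt a)\subseteq K$ while $K$ has a unique quadratic subfield (Lemma~\ref{a}), that subfield is $\Q(\sqrt a)$. Hence $K\in\cF_{4,a}(a,b)$ iff $q,aq\notin(\Q^{\times})^2$ and $\Q(\sqrt q)=\Q(\sqrt b)$, i.e. $q=bn^2$ for some $n\in\Z^{\ast}$. Because $a,b$ are distinct squarefree integers $\neq 0,1$, a squarefree-part computation shows the two non-square conditions are automatic once $q=bn^2$. Thus $\cF_{4,a}(a,b)\neq\emptyset$ iff $g^2-ah^2=bn^2$ is solvable in integers; since $h=0$ would force $b$ to be a square and $n=0$ would force $a$ to be a square, any nontrivial solution automatically has $h,n\neq 0$, and dividing by $n^2$ identifies this with the statement that $b$ is a norm of $\Q(\sqrt a)$.

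For the function $\varphi$, suppose $\cF_4(Q)\neq\emptyset$, equivalently (Condition~\ref{1234}) one of (\ref{co1}), (\ref{co2}), (\ref{co3}) holds. If (\ref{co1}) holds, the plane conic $X^2-aY^2-bZ^2=0$ is smooth ($ab\neq 0$) and has a rational point, hence is parametrized by $\mathbb{P}^1$ over $\Q$ and has infinitely many rational points; only finitely many lie on a coordinate line, so some integral solution $(g,h,n)$ of $g^2-ah^2=bn^2$ has all three coordinates nonzero, and replacing each by its absolute value yields a solution in positive integers. I would then set $\varphi(a,b)=(g_0,h_0,n_0)$ to be the lexicographically least triple of positive integers satisfying (\ref{ghnn}) (that is, $g_0^2-ah_0^2=bn_0^2$, subject to whatever primitivity normalization (\ref{ghnn}) imposes); this depends only on $(a,b)$. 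If (\ref{co1}) fails then $\cF_{4,a}(a,b)=\emptyset$ by the first part, so one of (\ref{co2}), (\ref{co3}) holds; after fixing a priority rule (say (\ref{co2}) before (\ref{co3})), the same argument applied to the conic attached to the relevant norm condition produces the canonical positive solution of (\ref{sub1}) or (\ref{sub2}), which I take to be $\varphi(a,b)$.

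The main obstacle is the Galois-theoretic bookkeeping in the middle step: pinning down, via the explicit $D_4$-action on the roots of $x^4-2gx^2+(g^2-ah^2)$ and the lattice of subfields, that $F_1=\Q(\sqrt{aq})$ and hence that the extended quadratic fields of $\Q(\sqrt{g+h\sqrt a})$ are $\Q(\sqrt a)$ and $\Q(\sqrt{g^2-ah^2})$ — the whole equivalence hinges on getting this identification right. Everything afterward (the reduction to the conic, the automatic nonvanishing of $h$ and $n$, and the canonical selection defining $\varphi$) is elementary, the only nontrivial input being that a smooth plane conic with one rational point has infinitely many.
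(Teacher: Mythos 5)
Your proposal is correct and follows essentially the same route as the paper: Lemma~\ref{cda} plus the explicit $D_4$-action/HSW91 classification reduces membership in $\cF_{4,a}(a,b)$ to solvability of the norm equation $g^2-ah^2=bn^2$, and $\varphi$ is then a canonical choice of solution of (\ref{ghnn}), (\ref{sub1}), or (\ref{sub2}). The only differences are cosmetic: you pin down the extended quadratic fields by identifying $F_1=\Q(\sqrt{aq})$ as the fixed field of $\langle r\rangle$, whereas the paper computes the fixed field of $\langle rs\rangle$ as $\Q(\sqrt{2g-2n\sqrt{b}})$, and you normalize $\varphi$ by lexicographic minimality over positive triples (using a conic-parametrization argument to get all coordinates nonzero), whereas the paper minimizes $n_0$, then $h_0$, and lets $g_0\ge 0$.
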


By Proposition \ref{cond}, we have 
$\cF_{4,a}(a,b) \neq \emptyset \iff b$ is a norm of $\Q(\sqrt{a}) \iff a$ is a norm of $\Q(\sqrt{b}) \iff \cF_{4,b}(a,b) \neq \emptyset$. Similarly, we have $\cF_{4,a}(a,\frac{ab}{\xi^2}) \neq \emptyset \iff \cF_{4,\frac{ab}{\xi^2}}(a,\frac{ab}{\xi^2}) \neq \emptyset$; $\cF_{4,b}(b,\frac{ab}{\xi^2}) \neq \emptyset \iff \cF_{4,\frac{ab}{\xi^2}}(b,\frac{ab}{\xi^2}) \neq \emptyset$.
Proposition \ref{cond} then gives that $\cF_4(Q) \neq \emptyset$ is equivalent to the statement that $(a,b)$ satisfies Condition \ref{1234}, because both are equivalent to the statement that at least one of the sets $\cF_{4,a}(a,b), \cF_{4,a}(a,\frac{ab}{\xi^2}), \cF_{4,b}(b,\frac{ab}{\xi^2})$ is nonempty.

For the moment, we assume that $\cF_{4,a}(a,b) \neq \emptyset$ and show that $|\cF_4(Q;X)| \gg_Q X^{1/2}$. Later we show how to reach the same conclusion, if $\cF_{4,a}(a,b)=\emptyset$ and at least one of the other two subfamilies is nonempty.

\begin{lem}\label{ab} 
Using Lemma \ref{cda} and Proposition \ref{cond}, we let $K=\Q(\sqrt{g+h\sqrt{a}}) \in \cF_{4,a}(a,b)$, where $g,h$ are integers, $h \neq 0$, and let a positive integer $n$ be given such that $n$ satisfies 
\begin{equation*}
g^2-h^2a=n^2b.
\end{equation*}
We will show that such an integer $n$ exists and is unique; see (\ref{n2b}) 
below. Then we have
\begin{equation}\label{dk}
D_K \le C_{a,b}n^2,
\end{equation}
where $C_{a,b}=256|a|^3|b|^3$. Moreover, under the analogous conditions, the same result (\ref{dk}) holds for fields in the other five subfamilies in (\ref{six}) with the same constant $C_{a,b}$.
\end{lem}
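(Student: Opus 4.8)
The plan is to bound $D_K$ by first controlling the ring of integers of $K$ (or at least a suitable order inside it) in terms of $g$, $h$, $a$, and $b$, then to use the existence of the relation $g^2 - h^2 a = n^2 b$ to rewrite everything in terms of $n$ alone. Concretely, I would start from the generator $\alpha = \sqrt{g + h\sqrt{a}}$ supplied by Lemma \ref{cda}, and consider the order $\Z[\alpha] \subseteq \cO_K$; then
\[
D_K \mid \mathrm{disc}(\Z[\alpha]/\Z) = \mathrm{disc}(f),
\]
where $f(x) = x^4 - 2g x^2 + (g^2 - h^2 a) \in \Z[x]$ is the minimal polynomial of $\alpha$. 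A direct computation of the discriminant of this (biquadratic-type) quartic gives something of the shape $\mathrm{disc}(f) = 2^{c} g^{c_1} h^{c_2} a^{c_3} (g^2 - h^2 a)^{c_4}$ for explicit small nonnegative exponents — in fact $\mathrm{disc}(f) = 256\, h^4 a^2 (g^2 - h^2 a)^2$ up to a harmless power of $2$, which one recognizes as $16 \cdot (\mathrm{Res}(f,f'))$ for this symmetric quartic. So the first step is just carefully evaluating this resultant/discriminant.

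Next I would substitute the hypothesis $g^2 - h^2 a = n^2 b$. This converts the bound $D_K \le \mathrm{disc}(f)$ into
\[
D_K \ll |h|^4 |a|^2 n^4 |b|^2,
\]
which is not yet of the desired form $C_{a,b} n^2$ because of the extraneous factors of $h^4$ and $n^4$. To remove them I need the uniqueness statement for $n$ and, more importantly, an a priori bound on $|h|$ (and on $|g|$) in terms of $a$, $b$, and $n$. This is where the relation (\ref{n2b}) referenced in the statement does the work: $g^2 - h^2 a = n^2 b$ together with the normalization built into the map $\varphi$ of Proposition \ref{cond} should pin down $(g, h, n)$ as the image $\varphi(a,b)$ — i.e. $g, h$ are bounded in terms of $a,b,n$ (plausibly $|h| \ll_{a,b} 1$ and $|g| \ll_{a,b} n$), so that the field $K$ is in fact essentially unique given $n$, and one is really counting the scaled generators $\sqrt{n^2 g_0 + \cdots}$ coming from a single base solution. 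Feeding $|h| \ll_{a,b} 1$ and $n^2 b = g^2 - h^2 a$ back in collapses $|h|^4 |a|^2 n^4 |b|^2$ down to $\ll_{a,b} n^2$, and tracking the constants (being generous: $256$, and cubing $|a|,|b|$ to absorb all the stray factors) yields $D_K \le 256 |a|^3 |b|^3 n^2$.

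The main obstacle I anticipate is precisely the bookkeeping around $\Z[\alpha]$ versus $\cO_K$ and the size of $h$ and $g$: one must argue that after the construction in Lemma \ref{cda} and Proposition \ref{cond} the parameters $g,h$ cannot be large relative to $n, a, b$ — otherwise the $h^4$ factor is fatal — and that passing from the order discriminant to $D_K$ only loses a bounded (in $a,b$) square factor, which is why a crude constant like $256|a|^3|b|^3$ suffices rather than a sharp one. For the final sentence of the lemma, the other five subfamilies in (\ref{six}) are handled identically: each has its own generator $\sqrt{g + h\sqrt{c}}$ with $c \in \{a, b, \tfrac{ab}{\xi^2}\}$ and an analogous norm relation $g^2 - h^2 c = n^2 c'$ with $c' \in \{a,b,\tfrac{ab}{\xi^2}\}$, and since $|c|, |c'| \le |ab| \le |a||b|$ the same computation goes through verbatim with the same constant $C_{a,b} = 256|a|^3|b|^3$ (the cubes giving enough room to absorb the $\tfrac{ab}{\xi^2}$ cases). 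I would state this once and then simply remark that the symmetry among $a, b, \tfrac{ab}{\xi^2}$ noted after Condition \ref{1234} makes the other cases formally the same.
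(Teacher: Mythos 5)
There is a genuine gap: your plan to remove the $h^4$ factor does not work, and the factor is fatal. The lemma is stated for an arbitrary $K=\Q(\sqrt{g+h\sqrt a})\in\cF_{4,a}(a,b)$ with whatever $(g,h)$ Lemma \ref{cda} produces; nothing in Lemma \ref{cda} or in the normalization $\varphi$ of Proposition \ref{cond} bounds $h$ in terms of $a,b,n$. In the very application the lemma is designed for (Proposition \ref{comb}(1)), the fields are $K_{[m]}=\Q(\sqrt{g_0m+h_0m\sqrt a})$, so $h=h_0m$ and $n=n_0m$ grow together; thus at best $|h|\ll_{a,b}n$, never $|h|\ll_{a,b}1$, and plugging $|h|\ll_{a,b}n$ into your bound $\mathrm{disc}(\Z[\alpha])=256\,h^4a^2(g^2-h^2a)$ (note: the factor $g^2-h^2a$ enters to the first power, not squared as you wrote, since for $x^4+px^2+q$ the discriminant is $16q(p^2-4q)^2$ with $p^2-4q=4h^2a$ here) only yields $D_K\ll_{a,b}n^6$, not $C_{a,b}n^2$. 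Worse, for fixed $n$ the parameter $h$ can be unbounded when $a>0$: e.g.\ $a=2$, $b=7$, where $g^2-2h^2=7$ has infinitely many solutions coming from Pell units, so no bound of the form $|h|\ll_{a,b,n}1$ exists, and the loss from $\Z[\alpha]$ to $\cO_K$ is genuinely unbounded unless you account for the index.

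The missing idea is to replace $\Z[\alpha]$ by the larger full-rank sublattice $\mathrm{span}_\Z\{1,\sqrt a,\alpha,\sqrt a\,\alpha\}\subseteq\cO_K$ (it contains $\Z[\alpha]$ with index $|h|^2$, since $\alpha^2=g+h\sqrt a$ and $\alpha^3=g\alpha+h\sqrt a\,\alpha$, i.e.\ $\sqrt a=(\alpha^2-g)/h$). Its discriminant is $\mathrm{disc}(\Z[\alpha])/h^4=256\,a^2(g^2-h^2a)$, so the $h$-dependence cancels exactly, and with $g^2-h^2a=n^2b$ one gets $D_K\le 256|a|^2|b|\,n^2\le C_{a,b}n^2$ uniformly in $(g,h)$. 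This is precisely the paper's proof, and it is what makes the constant usable for all six subfamilies (where $|a|^2|b|$ is replaced by the analogous product, absorbed by $256|a|^3|b|^3$). Your treatment of the other five subfamilies and the reduction via $g^2-h^2a=n^2b$ are fine in spirit, but without this lattice (or some other device controlling the index $[\cO_K:\Z[\alpha]]$ by $h^2$), the argument as proposed does not prove the stated bound.
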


Given an ordered pair $(a,b)$ with $\cF_4(\Q(\sqrt{a},\sqrt{b})) \neq \emptyset$, we recall the function $\varphi:(a,b) \mapsto (g_0,h_0,n_0)$ in Proposition \ref{cond} and set
\begin{equation}\label{defp}
M_a(a,b;X)=\{m \in \Z_{>0} \ \text{square-free} \mid \ \mathrm{gcd}(m,|ab|)=1, \ m \le \frac{1}{16n_0\sqrt{|a|^3|b|^3}} X^{1/2} \}. 
\end{equation}
We also set
\begin{equation*}
K_{[m]}=\Q(\sqrt{g_0m+h_0m\sqrt{a}})
\end{equation*}
for any positive integer $m$ and
\begin{equation*}
T_a(a,b;X)=\{K_{[m]}: \ m \in M_a(a,b;X) \}.
\end{equation*}

Assuming $\cF_{4,a}(a,b) \neq \emptyset$, we have the following lower bound on $|T_a(a,b;X)|$. This gives a lower bound of $|\cF_{4,a}(a,b;X)|$.

\begin{prop}\label{comb}
Assume that $\cF_{4,a}(a,b) \neq \emptyset$.
Then the following statements hold.

(1) We have
\begin{equation}\label{ta}
T_a(a,b;X) \subset \cF_{4,a}(a,b;X).
\end{equation}

(2) If $m_1,m_2 \in M_a(a,b;X)$, $m_1 \neq m_2$, then $K_{[m_1]} \neq K_{[m_2]}$.

(3) We have
\begin{equation}\label{tb}
|T_a(a,b;X)| \gg_{a,b} X^{1/2}.
\end{equation}
\end{prop}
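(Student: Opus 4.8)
The proof splits along parts (1)--(3), with essentially all of the work concentrated in part (1); parts (2) and (3) are then short.

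\textbf{Part (1).} Write $\alpha=g_0+h_0\sqrt a$, so that $K_{[m]}=\Q(\sqrt{m\alpha})$. I would first dispose of the discriminant bound. The integers $g_0m$ and $h_0m$ satisfy $(g_0m)^2-(h_0m)^2a=(mn_0)^2b$, so by Lemma \ref{ab} the unique positive integer attached to the generator $\sqrt{m\alpha}$ is $n=mn_0$, and (\ref{dk}) gives $D_{K_{[m]}}\le C_{a,b}(mn_0)^2=256|a|^3|b|^3n_0^2m^2$. The range $m\le\frac{1}{16n_0\sqrt{|a|^3|b|^3}}X^{1/2}$ imposed in the definition (\ref{defp}) of $M_a(a,b;X)$ is precisely the one that makes this $\le X$, so $K_{[m]}\in\cF_4(Q;X)$ provided $K_{[m]}\in\cF_{4,a}(a,b)$. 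For the latter, note first that $m\alpha$ is not a square in $\Q(\sqrt a)$: if $m\alpha=\beta^2$ with $\beta\in\Q(\sqrt a)$, then applying $N_{\Q(\sqrt a)/\Q}$ gives $m^2n_0^2b=N_{\Q(\sqrt a)/\Q}(\beta)^2$, forcing $b$ to be a rational square, contrary to $b$ being square-free and $\ne 0,1$. Hence $\sqrt{m\alpha}\notin\Q(\sqrt a)$, and since $\sqrt a=\frac{1}{h_0m}(\sqrt{m\alpha})^2-\frac{g_0}{h_0}\in\Q(\sqrt{m\alpha})$ (using $h_0\ne 0$), the field $K_{[m]}$ has degree $4$ over $\Q$ and contains $\Q(\sqrt a)$. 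Finally, that $K_{[m]}\in\cF_{4,a}(a,b)$ — i.e.\ that its Galois closure has group $D_4$, that $\Q(\sqrt a)$ is its unique quadratic subfield, and that $\Q(\sqrt a)$ and $\Q(\sqrt b)$ are its two extended quadratic fields (cf.\ Lemma \ref{a}) — follows from the same computation that proves Lemma \ref{cda} and Proposition \ref{cond} in the case $m=1$, applied now with the triple $(g_0m,h_0m,n_0m)$: the data determining this structure are the classes of $N_{\Q(\sqrt a)/\Q}(m\alpha)=m^2n_0^2b$ and of $a\cdot N_{\Q(\sqrt a)/\Q}(m\alpha)=m^2n_0^2ab$ in $\Q^\times/(\Q^\times)^2$, namely the classes of $b$ and of $ab$, and these are unchanged when $\alpha$ is scaled by the rational integer $m$. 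This proves (\ref{ta}).

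\textbf{Part (2).} Suppose $K_{[m_1]}=K_{[m_2]}$ with $m_1,m_2\in M_a(a,b;X)$. By part (1) both fields contain $\Q(\sqrt a)$, over which $K_{[m_i]}=\Q(\sqrt a)(\sqrt{m_i\alpha})$ is a quadratic extension; two such extensions coincide iff $(m_1\alpha)(m_2\alpha)=m_1m_2\alpha^2$ — equivalently $m_1m_2$ — is a square in $\Q(\sqrt a)$. Writing $m_1m_2=(u+v\sqrt a)^2$ with $u,v\in\Q$ forces $uv=0$; the case $u=0$ would make $m_1m_2a$ a rational square, impossible since $a$ is square-free with $|a|>1$ and $\gcd(m_1m_2,|a|)=1$ (here the coprimality constraint in (\ref{defp}) is used). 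Hence $v=0$ and $m_1m_2$ is a perfect square; as $m_1,m_2$ are square-free positive integers, $m_1=m_2$.

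\textbf{Part (3).} By parts (1) and (2), $|T_a(a,b;X)|=|M_a(a,b;X)|$, so it suffices to count square-free positive integers $m\le c_0X^{1/2}$ with $\gcd(m,|ab|)=1$, where $c_0=\frac{1}{16n_0\sqrt{|a|^3|b|^3}}$. A routine argument (Möbius inversion over square divisors together with inclusion--exclusion over the primes dividing $ab$) shows this count is $\sim\frac{6}{\pi^2}\prod_{p\mid ab}\frac{p}{p+1}\,c_0X^{1/2}$ as $X\to\infty$, hence $\gg_{a,b}X^{1/2}$ for all $X\ge 1$ after adjusting the implied constant. Combining with (\ref{ta}) gives $|\cF_{4,a}(a,b;X)|\ge|T_a(a,b;X)|\gg_{a,b}X^{1/2}$, which is (\ref{tb}); feeding this through the decomposition (\ref{six}) then yields the lower bound in Theorem \ref{vital}.

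I expect the only step with genuine content to be the structural part of (1): checking that multiplying the generator $\alpha=g_0+h_0\sqrt a$ by a rational integer $m$ leaves the Galois group of the closure and all of the (extended) quadratic subfields unchanged. Once the classification of fields of the form $\Q(\sqrt{g+h\sqrt a})$ underlying Lemma \ref{cda} and Proposition \ref{cond} is in place, this is pure bookkeeping — the relevant invariants move only within a square class — and the remaining steps reduce to an appeal to Lemma \ref{ab}, an elementary norm computation, and a standard square-free count.
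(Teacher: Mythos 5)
Correct, and essentially the paper's own argument: your part (1) reduces the structural claim to the computation already carried out for $(g_0,h_0,n_0)$ in the proof of Proposition \ref{cond} (the square class of $g^2-h^2a$ is preserved under scaling by $m$) together with the discriminant bound of Lemma \ref{ab}, your part (2) is the paper's uniqueness argument phrased via the criterion that $m_1m_2$ be a square in $\Q(\sqrt{a})$, and your part (3) is the same square-free count. One small repair: in (2) your reason for excluding the case $u=0$ cites $|a|>1$, which fails when $a=-1$; in that case $m_1m_2=v^2a<0$ gives the contradiction directly (this is exactly the case the paper treats separately).
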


Assuming the above lemmas and propositions, we deduce Theorem \ref{vital} as follows. Let $Q=\Q(\sqrt{a},\sqrt{b})$ be such that $\cF_4(Q) \neq \emptyset$. If $\cF_{4,a}(a,b) \neq \emptyset$, then (\ref{ta}) and (\ref{tb}) immediately give Theorem \ref{vital}. If $\cF_{4,a}(a,b)=\emptyset$, then $\cF_{4,a}(a,\frac{ab}{\xi^2}) \neq \emptyset$ or $\cF_{4,b}(b,\frac{ab}{\xi^2}) \neq \emptyset$. We choose any $c \in \{a,b\}$ such that $\cF_{4,c}(c,\frac{ab}{\xi^2}) \neq \emptyset$ holds and denote $\widehat{a}=c$, $\widehat{b}=\frac{ab}{\xi^2}$. Thus $\cF_{4,\widehat{a}}(\widehat{a},\widehat{b}) \neq \emptyset$. Note that $\Q(\sqrt{\widehat{a}},\sqrt{\widehat{b}})=\Q(\sqrt{a},\sqrt{b})$
and that $(\widehat{a},\widehat{b})$ satisfies (\ref{co1}). Again by Proposition \ref{cond}, the image of $(\widehat{a},\widehat{b})$ under $\varphi$ is a triple of integers.
Proposition \ref{comb} holds with $a$ replaced by $\widehat{a}$, $b$ replaced by $\widehat{b}$. Then (\ref{ta}) and (\ref{tb}) give Theorem \ref{vital}. 

\vspace{5mm}
\noindent\textbf{Proof of the lemmas and propositions}
\begin{proof}[Proof of Lemma \ref{cda}] 
Noting that $K$ is a degree 2 extension of $\Q(\sqrt{a})$, we can find $\alpha \in K \backslash \Q(\sqrt{a})$ s.t. $\alpha^2 \in \Q(\sqrt{a})$. Clearly $K=\Q(\sqrt{a})(\alpha)$. Every element in $\Q(\sqrt{a})$ has the form $u+v\sqrt{a}$, where $u,v \in \Q$, so $\alpha^2=g'+h'\sqrt{a}$, where $g',h' \in \Q$. 
If $h'=0$, then $\Q(\alpha)$ is a normal extension of $\Q$, leading to a contradiction since $K \neq \widetilde{K}$. Thus we have $h' \in \Q^{*}$. 
Letting $\lambda>0$ be the least common multiple of the denominators of $g'$ and $h'$ (if $g'=0$, then 1 is its denominator), we have $\lambda^2\alpha^2=\lambda^2g'+\lambda^2h'\sqrt{a}$. Now we let $g=\lambda^2g', h=\lambda^2h'$; then $g \in \Z$, $h \in \Z^*$, and $\sqrt{g+h\sqrt{a}} \in K$. 
Since $\alpha \notin \Q(\sqrt{a})$, we have $\sqrt{g+h\sqrt{a}} \notin \Q(\sqrt{a})$. Thus, $\Q(\sqrt{g+h\sqrt{a}})$ is a quartic subextension of $K$. It forces $K=\Q(\sqrt{g+h\sqrt{a}})$.
\end{proof}

\begin{proof}[Proof of Proposition \ref{cond}]

For the first direction, we assume $\cF_{4,a}(a,b) \neq \emptyset$. By Lemma \ref{cda}, for $K \in \cF_{4,a}(a,b)$, we can write $K=\Q(\sqrt{g+h\sqrt{a}})$ where $g \in \Z$, $h \in \Z^*$. It is easy to see that 
\begin{equation*}
\widetilde{K}=K(\sqrt{g-h\sqrt{a}})=\Q(\sqrt{g+h\sqrt{a}},\sqrt{g-h\sqrt{a}}).
\end{equation*}
We notice that $\sqrt{g+h\sqrt{a}} \cdot \sqrt{g-h\sqrt{a}}=\sqrt{g^2-h^2a} \in \Q(\sqrt{a}),\Q(\sqrt{b}) \text{ or } \Q(\frac{\sqrt{ab}}{\xi})$. It cannot be in $\Q(\sqrt{a})$, otherwise $\sqrt{g-h\sqrt{a}}=(\sqrt{g+h\sqrt{a}})^{-1}\sqrt{g^2-h^2a} \in K$, which leads to a contradiction since $K \neq \widetilde{K}$. Therefore we have $\sqrt{g^2-h^2a} \in \Q(\sqrt{b}) \cup \Q(\frac{\sqrt{ab}}{\xi})$, which induces that either 
\begin{equation}\label{n2b}
g^2-h^2a=n^2b
\end{equation} or 
\begin{equation}\label{n2ab}
g^2-h^2a=n^2\frac{ab}{\xi^2},
\end{equation}
for some $n \in \Z_{>0}$.

To study the automorphisms of $\widetilde{K}/\Q$, we use the notations $\mathbb{1}=\sqrt{g+h\sqrt{a}}$, $\mathbb{2}=\sqrt{g-h\sqrt{a}}$, $\mathbb{3}=-\sqrt{g+h\sqrt{a}}$, and $\mathbb{4}=-\sqrt{g-h\sqrt{a}}$. Then we see $\mathrm{Gal}(\widetilde{K}/\Q) \cong D_4=\langle r,s \ | \ r^4=1, s^2=1, srs^{-1}=r^{-1} \rangle$ by setting $r=(\mathbb{1}\mathbb{2}\mathbb{3}\mathbb{4})$ and $s=(\mathbb{1}\mathbb{3})$ as permutations of four letters. The field $K=\Q(\sqrt{g+h\sqrt{a}})$ appears as the fixed field of $\langle r^2s \rangle$ (i.e., $K_2$ in the lattice of fields) and the field $\Q(\sqrt{g-h\sqrt{a}})$ appears as the fixed field of $\langle s \rangle$ (i.e., $K_1$ in the lattice of fields). Now we would like to compute the fixed field of $\langle rs \rangle$. Using the temporary notation $A=\sqrt{g+h\sqrt{a}}$ and $B=\sqrt{g-h\sqrt{a}}$, we notice that $A-B$ is invariant under the automorphism $rs=(\mathbb{1}\mathbb{4})(\mathbb{2}\mathbb{3})$. Moreover, $A^2-B^2=2h\sqrt{a}$ and $A^2+B^2=2g$. Thus,
\begin{equation*}
(A-B)^2=2g-2AB=2g-2\sqrt{g^2-h^2a}.
\end{equation*}

If we have (\ref{n2b}), then $(A-B)^2=2g-2n\sqrt{b}$ and $A-B=\pm \sqrt{2g-2n\sqrt{b}}$. The fixed field of $\langle rs \rangle$ is $\Q(\sqrt{2g-2n\sqrt{b}})$. Thus, the extended quadratic fields of $K$ are $\Q(\sqrt{a})$ and $\Q(\sqrt{b})$.

If we have (\ref{n2ab}), then $(A-B)^2=2g-2n\xi^{-1}\sqrt{ab}$ and $A-B=\pm \sqrt{2g-2n\xi^{-1}\sqrt{ab}}$. The fixed field of $\langle rs \rangle$ is $\Q(\sqrt{2g-2n\xi^{-1}\sqrt{ab}})$. Thus, the extended quadratic fields of $K$ are $\Q(\sqrt{a})$ and $\Q(\sqrt{\frac{ab}{\xi^2}})$. This case is excluded because $K \in \cF_{4,a}(a,b)$. Thus, we must have (\ref{n2b}), which is equivalent to the fact that $b$ is a norm of $\Q(\sqrt{a})$.










For the other direction 
of Proposition \ref{cond}, 
we know that (\ref{n2b}) has a nontrivial solution because $b$ is a norm of $\Q(\sqrt{a})$. 
We fix a well-defined nontrivial solution $(g_0,h_0,n_0)$ of (\ref{n2b}) 
in the following way. 
First, we define the set $N=\{n \in \Z_{>0}: \ \exists g \in \Z_{\ge0}, h \in \Z_{>0} \text{ s.t. } g^2-h^2a-n^2b=0\}$.
The set $N$ is nonempty and there exists the least element $n_0$ in $N$. Second, we define the set $H=\{h \in \Z_{>0}: \ \exists g \in \Z_{\ge0} \text{ s.t. } g^2-h^2a-n_0^2b=0\}$. 
There exists the least element $h_0$. Once $n_0,h_0$ are determined, $g_0$ is uniquely determined as the nonnegative solution of $g^2-h_0^2a-n_0^2b=0$. 
In this way can we choose the well-defined solution $(g_0,h_0,n_0)$, i.e., $g_0,h_0,n_0$ are nonnegative integers only depending on the ordered pair $(a,b)$ and satisfy $h_0 \neq 0$, $n_0 \neq 0$,
\begin{equation}\label{ghnn}
g_0^2-h_0^2a=n_0^2b. \\
\end{equation}

As an example, if $(a,b)=(2,7)$, then $(g_0,h_0,n_0)=(3,1,1)$.

We denote $K_0=\Q(\sqrt{g_0+h_0\sqrt{a}})$ and claim that $K_0 \in \cF_{4,a}(a,b)$. We see that $K_0$ is quartic and has a quadratic subfield $\Q(\sqrt{a})$. By \cite[p.88]{HSW91}, we know that $\Gal(\widetilde{K_0}/\Q) \cong K_4 \iff g_0^2-h_0^2a=k^2$ for some integer $k$; $\Gal(\widetilde{K_0}/\Q) \cong C_4 \iff g_0^2-h_0^2a=ak^2$ for some integer $k$; $\Gal(\widetilde{K_0}/\Q) \cong D_4 \iff g_0^2-h_0^2a \neq k^2 \text{ or } ak^2$ for any integer $k$. Here $K_4$ is the Klein 4-group and $C_4$ is the cyclic group of order 4.
Since $g_0^2-h_0^2a=n_0^2b$ and $n_0^2b \neq k^2 \text{ or } ak^2$ for any $k$, we have $\Gal(\widetilde{K_0}/\Q) \cong D_4$. Moreover, $\widetilde{K_0}=\Q(\sqrt{g_0+h_0\sqrt{a}})(\sqrt{g_0-h_0\sqrt{a}})$ contains $\sqrt{g_0^2-h_0^2a}$, an element of $\Q(\sqrt{a},\sqrt{b}) \backslash \Q(\sqrt{a})$. Thus, $\widetilde{K_0}$ contains $\Q(\sqrt{a},\sqrt{b})$ and hence $K_0 \in \cF_4(Q)$. Following the proof of the first direction of Proposition \ref{cond}, we know that $\Q(\sqrt{2g_0-2n_0\sqrt{b}})$ is in the same lattice of fields as $K_0$, so the extended quadratic fields of $K_0$ are $\Q(\sqrt{a})$ and $\Q(\sqrt{b})$. This gives $K_0 \in \cF_4(a,b)$. Recalling that $K_0$ has a quadratic subfield $\Q(\sqrt{a})$, we conclude that $K_0 \in \cF_{4,a}(a,b)$ and $\cF_{4,a}(a,b) \neq \emptyset$.


The function $\varphi$ can be constructed as follows. We assume that $\cF_4(Q) \neq \emptyset$. If $\cF_{4,a}(a,b) \neq \emptyset$, then we let $\varphi$ send $(a,b)$ to $(g_0,h_0,n_0)$ as above. If $\cF_{4,a}(a,b)=\emptyset$, then $\cF_{4,a}(a,\frac{ab}{\xi^2}) \neq \emptyset$ or $\cF_{4,b}(b,\frac{ab}{\xi^2}) \neq\emptyset$. 
If $\cF_{4,a}(a,\frac{ab}{\xi^2}) \neq \emptyset$, then we choose $(g_0,h_0,n_0)$ to be the triple satisfying 
\begin{equation}\label{sub1}
g_0^2-h_0^2a=n_0^2\frac{ab}{\xi^2}
\end{equation}
using the same procedure as above.
If $\cF_{4,a}(a,\frac{ab}{\xi^2})=\emptyset$ (and hence $\cF_{4,b}(b,\frac{ab}{\xi^2}) \neq\emptyset$), then we choose $(g_0,h_0,n_0)$ to be the triple satisfying
\begin{equation}\label{sub2}
g_0^2-h_0^2b=n_0^2\frac{ab}{\xi^2}
\end{equation}
using the same procedure as above.
Now we finish the proof of Proposition \ref{cond}.
\end{proof}

\begin{proof}[Proof of Lemma \ref{ab}]
We know from algebraic number theory that the ring of integers $\cO_K$ is a free $\Z$-module with rank 4, and that $\text{span}_\Z \{1,\sqrt{a},\sqrt{g+h\sqrt{a}},\sqrt{ag+ah\sqrt{a}}\}$ is a sublattice of $\cO_K$. Therefore we have 
\begin{equation}\label{heng}
|\text{disc}(K)|=|\text{disc}(\cO_K)| \le |\text{disc}(\text{span}_\Z \{1,\sqrt{a},\sqrt{g+h\sqrt{a}},\sqrt{ag+ah\sqrt{a}}\})|=|256a^2(g^2-h^2a)|. 
\end{equation}
Note that $(g,h,n)$ satisfies relation (\ref{n2b}), 
hence $D_K \le 256|a|^2|b|n^2$. By choosing $C_{a,b}=256|a|^3|b|^3$ we have $D_K \le C_{a,b}n^2$. In the same way, the result $D_K \le C_{a,b}n^2$ holds for fields in other five subfamilies
with the same constant $C_{a,b}$. Lemma \ref{ab} then follows. 
\end{proof}

Note that Huard, Spearman, and Williams \cite{HSW91} compute explicitly the discriminant of a quartic field of the form $\Q(\sqrt{g+h\sqrt{a}})$; see Theorem 1 of \cite{HSW91}. We do not approach their method here, since Lemma \ref{ab} is sufficient for our purpose.

Now we prove Proposition \ref{comb}.
Remember that we have fixed the ordered pair $(a,b)$ (hence have fixed $(g_0,h_0,n_0)$). 

\begin{proof}[Proof of (1) of Proposition \ref{comb}]
For $m \in M_a(a,b;X)$, the proof of $K_{[m]} \in \cF_{4,a}(a,b)$ is the same as that of $K_0 \in \cF_{4,a}(a,b)$ in the proof of Proposition \ref{cond}, so we omit it here.

By (\ref{defp}), we know that if $m \in M_a(a,b;X)$, then $n_0^2C_{a,b}m^2 \le n_0^2C_{a,b}\cdot \frac{1}{n_0^2C_{a,b}}X=X$. For any $K_{[m]} \in T_a(a,b;X)$, we have $m \in M_a(a,b;X)$. By (\ref{heng}) we have
\begin{equation*}
D_{K_{[m]}} \le |256a^2((g_0m)^2-(h_0m)^2a)| = 256|a|^2n_0^2|ab|m^2  \le n_0^2C_{a,b}m^2 \le X.
\end{equation*}
Thus, we have $K_{[m]} \in \cF_{4,a}(a,b;X)$. It follows that $T_a(a,b;X) \subset \cF_{4,a}(a,b;X)$.
\end{proof}

\begin{proof}[Proof of (2) of Proposition \ref{comb}]
We let $m_1,m_2 \in M_a(a,b;X)$ such that $m_1 \neq m_2$. If $K_{[m_1]}=K_{[m_2]} \in T_a(a,b;X)$, then $\frac{\sqrt{g_0m_2+h_0m_2\sqrt{a}}}{\sqrt{g_0m_1+h_0m_1\sqrt{a}}}=\sqrt{\frac{m_2}{m_1}} \in K_{[m_1]}$. Thus we have $\sqrt{m_1m_2} \in K_{[m_1]}$. Since $(\sqrt{m_1m_2})^2 \in \Q$ and note that $\Q(\sqrt{a})$ is the only quadratic subfield of $K_{[m_1]}$, we know that $\sqrt{m_1m_2} \in \Q(\sqrt{a})$. Note also that $a$ and $m_1m_2$ are integers in $\Q(\sqrt{a})$, where $a$ is square-free and $m_1m_2$ is not a perfect square, it follows that $|a|$ divides $m_1m_2$. This contradicts the fact that $(|a|,m_1)=(|a|,m_2)=1$ if $a \neq -1$. If $a=-1$, then since $m_1m_2$ is positive and is not a perfect square, $\sqrt{m_1m_2} \notin \Q(\sqrt{-1})$. We also have a contradiction. Therefore we know that $K_{[m_1]} \neq K_{[m_2]}$.
\end{proof}

\begin{proof}[Proof of (3) of Proposition \ref{comb}]
By (2) of Proposition \ref{comb}, there is a one-to-one correspondence between the elements in $T_a(a,b;X)$ and $M_a(a,b;X)$, so $|T_a(a,b;X)|=|M_a(a,b;X)|$. We need to prove that 
\begin{equation}\label{mx}
|M_a(a,b;X)| \gg_{a,b} X^{1/2}.
\end{equation}
For $Z \in \R_{>0}$ and a positive integer $q \ge 2$, we define the set
\begin{equation*}
M(Z,q)=\{ m \in \Z_{>0} \text{ square-free} \ | \ \mathrm{gcd}(m,q)=1, m \le Z \}.
\end{equation*}
It suffices to prove that
\begin{equation}\label{mz}
M(Z,q) \gg_q Z \ \ \text{as} \ Z \to \infty.
\end{equation} 
With (\ref{mz}) in hand, (\ref{mx}) follows immediately once we take $Z=\frac{1}{16n_0\sqrt{|a|^3|b|^3}} X^{1/2}$ and $q=|ab|$.

We recall the M\"{o}bius function $\mu$ and Euler's totient function $\phi$. Also, we temporarily use the notation $(\cdot , \cdot)$ instead of $\mathrm{gcd}(\cdot , \cdot)$ for brevity. Then we have
\begin{eqnarray}
\nonumber M(Z,q) &=& \sum_{\substack{m \le Z \\ (m,q)=1}} \sum_{d^2|m} \mu(d) \ \ = \sum_{d \le \sqrt{Z}} \mu(d) \sum_{\substack{m \le Z \\ (m,q)=1 \\ d^2|m}} 1 \ \ = \sum_{\substack{d \le \sqrt{Z} \\ (d,q)=1}} \mu(d) \sum_{\substack{m \le Z \\ (m,q)=1 \\ d^2|m}} 1 \\
\nonumber &\stackrel{m=m'd^2}{=}& \sum_{\substack{d \le \sqrt{Z} \\ (d,q)=1}} \mu(d) \sum_{\substack{m' \le \lfloor \frac{Z}{d^2} \rfloor \\ (m',q)=1 }} 1 \ \ = \sum_{\substack{d \le \sqrt{Z} \\ (d,q)=1}} \mu(d) \left[ \frac{\phi(q)}{q} \lfloor \frac{Z}{d^2} \rfloor+O(1) \right] \\
\nonumber &=& \frac{\phi(q)}{q} \sum_{\substack{d \le \sqrt{Z} \\ (d,q)=1}} \mu(d) \lfloor \frac{Z}{d^2} \rfloor+O(\sqrt{Z}) \ \ = \frac{Z\phi(q)}{q} \sum_{\substack{d \le \sqrt{Z} \\ (d,q)=1}} \frac{\mu(d)}{d^2}+O(\sqrt{Z})+O(\sqrt{Z}) \\
\nonumber &=& \frac{Z\phi(q)}{q} \sum_{(d,q)=1} \frac{\mu(d)}{d^2}+O(Z\sum_{d>\sqrt{Z}} \frac{1}{d^2})+O(\sqrt{Z}) \\
\nonumber &=& \frac{Z\phi(q)}{q} \prod_{\substack{p \text{ prime} \\ p \nmid q}} (1-p^{-2})+O(\sqrt{Z}) \ \ = \frac{\phi(q)}{q\zeta(2)} \prod_{\substack{p \text{ prime} \\ p \mid q}} (1-p^{-2})^{-1} Z+O(\sqrt{Z}).
\end{eqnarray}
Therefore, (\ref{mz}) follows. Now we finish the proof of Proposition \ref{comb}.
\end{proof}

Another natural way to count $D_4$-quartic fields is to count the fields up to isomorphism. In that way, our lower bound still holds, since one isomorphism class of fields is in one-to-one correspondence with two fields in $\bar{\Q}$. In detail, in the lattice of fields, inside $\bar{\Q}$, $K_1$ and $K_2$ are the only two representatives of the same isomorphism class of fields. In other words, if $K=\Q(\sqrt{g+h\sqrt{a}})$ is a $D_4$-quartic field, where $g \in \Z$, $h \in \Z^*$, then the only other field isomorphic to $K$ in $\bar{\Q}$ is $\Q(\sqrt{g-h\sqrt{a}})$.

\end{section}

\begin{section}{Proof of Theorem \ref{cdtzf}}

In this section we assume that for a field $K \in \cF_4(Q;X)$ with $Q=\Q(\sqrt{a},\sqrt{b})$, $\zeta_{\widetilde{K}}(s)/\zeta_Q(s)=L^2(s,\rho_{\widetilde{K}})$ is zero-free in the region (\ref{azfr2}), and then derive Theorem \ref{cdtzf} with the assumption above. We proceed via a simple adaptation of the argument in \cite{PTW17}, except now we use the fact that the $L$-functions $L(s,\chi_{a^\ast})$, $L(s,\chi_{b^\ast})$, and $L(s,\chi_{({\frac{ab}{\xi^2}})^\ast})$ are fixed, and we move to the right of any exceptional zero they may possess.

First, we consider the zero-free region for $\zeta_Q(s)/\zeta(s)$, where $Q=\Q(\sqrt{a},\sqrt{b})$; this is a product of Dirichlet $L$-functions.  
Theorem 5.26 of \cite{IK04} provides the standard zero-free region for a Dirichlet $L$-function.
\begin{prop}\label{ik}
There exists an absolute constant $C_0>0$ such that for any primitive Dirichlet character $\chi$ modulo $q$, $L(s,\chi)$ has at most one zero $s=\sigma+it$ in the region 
\begin{equation*}
\sigma \ge 1-\frac{C_0}{\log q(|t|+3)}.
\end{equation*}
The exceptional zero may occur only if $\chi$ is real, and it is then a simple real zero, say $\beta_\chi$, with
\begin{equation*}
1-\frac{C_0}{\log 3q} \le \beta_\chi <1.
\end{equation*}
\end{prop}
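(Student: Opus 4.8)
The statement is the classical zero-free region of de la Vall\'ee Poussin (Theorem 5.26 of \cite{IK04}), so the plan is to reproduce that argument. The two ingredients are: (i) a partial-fraction estimate, obtained from the Hadamard product of the entire function $(s-1)^{\delta_\chi}L(s,\chi)$ (of order $1$; here $\delta_\chi=1$ if $\chi=\chi_0$ and $0$ otherwise) together with the functional equation to absorb the archimedean factor and the conductor into a single $\log(q(|t|+3))$ term, namely, for $1<\sigma=\mathrm{Re}(s)\le 2$,
\begin{equation*}
-\mathrm{Re}\,\frac{L'}{L}(s,\chi)\le B\log\big(q(|t|+3)\big)+\delta_\chi\,\mathrm{Re}\,\frac{1}{s-1}-\sum_{\rho}\mathrm{Re}\,\frac{1}{s-\rho},
\end{equation*}
where $B$ is absolute, $\rho=\beta+i\gamma$ runs over the nontrivial zeros, and each term $\mathrm{Re}\,\frac{1}{s-\rho}=\frac{\sigma-\beta}{(\sigma-\beta)^2+(t-\gamma)^2}$ is nonnegative for $\sigma>1\ge\beta$, so that zeros may be discarded at will; and (ii) the positivity $3+4\cos\phi+\cos 2\phi=2(1+\cos\phi)^2\ge 0$.

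First I would apply (ii) term by term to the Dirichlet series $-\frac{L'}{L}(s,\psi)=\sum_n\Lambda(n)\psi(n)n^{-s}$ for $\psi\in\{\chi_0,\chi,\chi^2\}$ at the points $\sigma,\sigma+it,\sigma+2it$, using $-\mathrm{Re}\,\frac{\zeta'}{\zeta}(\sigma)\le\frac{1}{\sigma-1}+B$ and the fact that $-\frac{L'}{L}(s,\chi_0)$ and $-\frac{\zeta'}{\zeta}(s)$ differ by $O(\log q)$, to get
\begin{equation*}
-3\,\mathrm{Re}\,\frac{\zeta'}{\zeta}(\sigma)-4\,\mathrm{Re}\,\frac{L'}{L}(\sigma+it,\chi)-\mathrm{Re}\,\frac{L'}{L}(\sigma+2it,\chi^2)\ge -B\log q.
\end{equation*}
Then, given a nontrivial zero $\rho_0=\beta+i\gamma$ of $L(s,\chi)$, I would specialize $t=\gamma$, insert (i), and keep only $\rho_0$ on the right (where it contributes $\frac{1}{\sigma-\beta}$), obtaining
\begin{equation*}
0\le\frac{3}{\sigma-1}-\frac{4}{\sigma-\beta}+\delta_{\chi^2=\chi_0}\,\frac{\sigma-1}{(\sigma-1)^2+4\gamma^2}+B\log\big(q(|\gamma|+3)\big).
\end{equation*}
When $\chi$ is not real the middle term is absent, and choosing $\sigma=1+\tfrac{c_1}{\log(q(|\gamma|+3))}$ with $c_1$ small and absolute, then solving for $\beta$, yields $\beta\le 1-\tfrac{c_2}{\log(q(|\gamma|+3))}$; the same works verbatim for real $\chi$ as long as $|\gamma|\ge\tfrac{c_3}{\log q}$, since then the middle term is itself $O(\log q)$. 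This already proves the proposition for complex $\chi$ (with no exceptional zero) and for all zeros of a real $\chi$ that are not extremely close to the real axis.

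The remaining, and only delicate, case is a zero of a real $\chi$ with $|\gamma|<\tfrac{c_3}{\log q}$: here the $3$-$4$-$1$ device degenerates, because $L(s,\chi^2)=L(s,\chi_0)$ itself has a pole at $s=1$ that cancels the gain. For this I would argue at the real point $s=\sigma>1$: since $1+\chi(n)\ge 0$, the Dirichlet series of $-\frac{L'}{L}(s,\chi_0)-\frac{L'}{L}(s,\chi)$ has nonnegative coefficients, so $\sum_\rho\mathrm{Re}\,\frac{1}{\sigma-\rho}\le\frac{1}{\sigma-1}+B\log q$. A conjugate pair $\beta\pm i\gamma$ contributes $\frac{2(\sigma-\beta)}{(\sigma-\beta)^2+\gamma^2}$, a double real zero contributes $\frac{2}{\sigma-\beta}$, and two distinct near-$1$ real zeros contribute $\frac{1}{\sigma-\beta_1}+\frac{1}{\sigma-\beta_2}$; in each case, choosing $\sigma-1\asymp\tfrac{1}{\log q}$ large enough relative to the tiny $|\gamma|$ (so that one is on the decreasing branch of $x\mapsto\frac{2x}{x^2+\gamma^2}$) forces a contradiction once all the relevant real parts exceed $1-\tfrac{c_4}{\log q}$. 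Hence at most one zero of $L(s,\chi)$ lies near $s=1$, and it must be real and simple. Taking $C_0<\min(c_2,c_4)$ small enough to also cover $|\gamma|<\tfrac{c_3}{\log q}$ gives the full statement; the asserted lower bound $\beta_\chi\ge 1-\tfrac{C_0}{\log 3q}$ merely records that the exceptional zero, when present, lies in the region at $t=0$.

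The main obstacle is exactly this last step: the positivity mechanism driving the proof is powerless for real characters near the real axis, and the substitute real-point argument, though it pins the offending zero down to being unique, real, and simple, intrinsically yields no effective lower bound separating $\beta_\chi$ from $1$ — this is the ineffectivity behind Siegel's theorem, and it is the reason an exceptional zero must be admitted in the statement. A secondary but routine nuisance is the bookkeeping that turns the conductor and gamma-factor data into the clean $\log(q(|t|+3))$ and $\log 3q$ above, and that tracks the $O(\log q)$ discrepancy between $L(s,\chi_0)$ and $\zeta(s)$.
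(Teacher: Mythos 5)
Your proposal is correct and is exactly the classical de la Vall\'ee Poussin argument (Hadamard/partial-fraction estimate plus the $3$-$4$-$1$ trick, with the separate real-point positivity argument for a real character near $s=1$), which is the proof of Theorem 5.26 of \cite{IK04}; the paper itself offers no proof of Proposition \ref{ik} beyond citing that theorem. So your route coincides with the source the paper relies on, and the only remaining work is the routine constant bookkeeping you already flag.
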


We set 
\begin{equation*}
q_{\max}=\max\{|a^\ast|,|b^\ast|,|(\frac{ab}{\xi^2})^\ast|\}
\end{equation*}
 and denote 
 \begin{equation}\label{betamax}
 \beta_{\max}=\max \{ \beta_{\chi_{a^\ast}},\beta_{\chi_{b^\ast}},\beta_{\chi_{{(\frac{ab}{\xi^2})}^\ast}} \}.
 \end{equation}
 If none of $\beta_{\chi_{a^\ast}},\beta_{\chi_{b^\ast}},$ or $\beta_{\chi_{{(\frac{ab}{\xi^2})}^\ast}}$ exists, we simply set $\beta_{\max}=\frac34$. Then by Proposition \ref{ik}, $\zeta_Q(s)/\zeta(s)$ has no zeros in the region $R_1 \cap R_2$, where
\begin{equation*}
R_1=\{ \sigma+it: \beta_{\max} < \sigma \le 1 \},
\end{equation*}
\begin{equation*}
R_2=\{ \sigma+it: \sigma \ge 1-\frac{C_0}{\log q_{\max}(|t|+3)} \}.
\end{equation*}

Second, by the hypothesis of Theorem \ref{cdtzf}, the function $\zeta_{\widetilde{K}}(s)/\zeta_Q(s)$ is zero-free in the region (\ref{azfr2}).
Given $\ep_0$, we let $\delta$ be the constant in (\ref{delta}),
so $\delta$ depends only on $\ep_0$.
We choose $\ep_0$ sufficiently small such that $\delta<1-\beta_{\max}$.

Consequently, 
the function $\zeta_{\widetilde{K}}(s)/\zeta(s)$ is zero-free in the region $R_1 \cap R_2 \cap R_3$, where 
\begin{equation*}
R_3=\{ \sigma+it: \sigma \ge 1-\delta, |t| \le (\log D_{\widetilde{K}})^{2/\delta} \}.
\end{equation*}
Since $\beta_{\max}<1-\delta$, the zero-free region $R_1 \cap R_2 \cap R_3$ is the same as
\begin{equation*}
R_4=R_2 \cap R_3=\{ \sigma+it: \sigma \ge \max\{ 1-\frac{C_0}{\log q_{\max}(|t|+3)},1-\delta \}, |t| \le (\log D_{\widetilde{K}})^{2/\delta} \}.
\end{equation*}
In our Chebotarev density theorem we are interested in the range where $D_{\widetilde{K}} \to \infty$, so we can assume that $D_{\widetilde{K}}$ is sufficiently large and the above zero-free region $R_4$ becomes
\begin{equation}\label{7z}
\begin{cases}
& \sigma \ge 1-\delta, \text{ if } |t| \le T_0, \\
& \sigma \ge 1-\frac{C_0}{\log q_{\max}(|t|+3)}, \text{ if } T_0 \le |t| \le (\log D_{\widetilde{K}})^{2/\delta},
\end{cases}
\end{equation}
where $T_0=\frac{e^{C_0/\delta}}{q_{\max}}-3$ is the height of the intersection point of the boundary lines of two zero-free regions for $\zeta_{\widetilde{K}}(s)/\zeta_Q(s)$ and $\zeta_{Q}(s)/\zeta(s)$. In fact we can let 
\begin{equation}\label{c0}
D_{\widetilde{K}} \ge \exp(\exp(\frac{C_0}{2}))
\end{equation}
to fulfill our assumption.

To prove Theorem \ref{cdtzf}, we consider two different ranges of $x$. For $x \ge \exp(80(\log D_{\widetilde{K}})^2)$, we note that the error term allowed in Theorem \ref{cdtzf} is larger than the error term $c_3x \exp(-c_4(\log x)^{1/2})$ (where $c_3,c_4$ are effectively computable constants) in the unconditional effective Chebotarev density theorem of Lagarias and Odlyzko (Theorem 1.3 in \cite{LO75}), so our Chebotarev density theorem holds for such $x$. Now we assume that $x \le \exp(80(\log D_{\widetilde{K}})^2)$.

For $K \in \cF_4(Q)$, we define the weighted prime-counting function as
\begin{equation*}
\psi_\cC(x, \widetilde{K}/\Q)=\sum_{\substack{p,\hspace{.5mm} m \\ p \text{ unramified in } \cO_{\widetilde{K}} \\ \mathrm{Nm}_{\widetilde{K}/\Q}p^m \le x \\ \left[ \frac{\widetilde{K}/\Q}{p} \right]=\cC}} \log p,
\end{equation*}
and the final result for $\pi_\cC(x,\widetilde{K}/\Q)$ will follow from partial summation. By Theorem 7.1 of \cite{LO75}, we have
\begin{equation}\label{ertm}
|\psi_\cC(x, \widetilde{K}/\Q)-\frac{|\cC|}{|G|}x| \le C_1(S(x,T)+E_1+E_2),
\end{equation}
where $C_1$ is an absolute constant and
\begin{equation*}
E_1=\frac{|\cC|}{|G|} (xT^{-1}\log x \log D_{\widetilde{K}}+\log D_{\widetilde{K}}+8\log x+8xT^{-1}\log x \log T),
\end{equation*}
\begin{equation*} 
E_2=\frac{|\cC|}{|G|} (\log x \log D_{\widetilde{K}}+8xT^{-1}(\log x)^2).
\end{equation*}
By (4.24) of \cite{PTW17}, in the case of $G=D_4$, we know that 
\begin{equation}\label{sxt}
|S(x,T)| \le C_2 \frac{|\cC|}{|G|} (E_3+E_4+E_5),
\end{equation}
where $C_2$ is an absolute constant and $E_3=8x^{1/2}(\log D_{\widetilde{K}})^2$, $E_4=x^{1-\delta \log T \log(D_{\widetilde{K}}T^8)}$, $E_5=x^{1-\frac{C}{\log q_{\max}(T+3)}} \log T \log(D_{\widetilde{K}}T^8)$. We set
\begin{equation*}
T=(\log D_{\widetilde{K}})^{2/\delta}.
\end{equation*}

We are able to use the analysis of the error terms $E_1,E_2,E_3$, and $E_4$ in Section 4 of \cite{PTW17} to show that the absolute values of the four error terms are bounded by $C_3 |\cC||G|^{-1} x(\log x)^{-1}$, provided that 
\begin{equation}\label{c4}
D_{\widetilde{K}} \ge C_4.
\end{equation}
Note that $C_3$ and $C_4$ are absolute constants.
The only difference is the term $E_5$ due to a different value of $\cL(T)$, the width of the zero-free region up to the height $T$. In our setting, 
\begin{equation*}
\cL(T)=\frac{C_0}{\log q_{\max}(T+3)}.
\end{equation*}
 
In order to have the bound for the error term as claimed in (\ref{bnd}), we want
\begin{equation*}\label{middle}
x^{1-\cL(T)}\log T\log(D_{\widetilde{K}}T^8) \le C_5\frac{|\cC|}{|G|}x(\log x)^{-1},
\end{equation*}
where $C_5$ is an absolute constant.
If this holds,
the error term in (\ref{ertm}) becomes the right hand side of (\ref{bnd}) after partial summation.
Upon recalling $x \le \exp(80(\log D_{\widetilde{K}})^2)$, it suffices to have
\begin{equation*}
x \ge \exp\{ C_0^{-1} \log[2q_{\max}(\log D_{\widetilde{K}})^{2/\delta}] \log[C_6 \delta^{-2} (\log D_{\widetilde{K}})^4] \},
\end{equation*}
where $C_6=21760C_5^{-1}$.
We write this as
\begin{equation}\label{e5}
x \ge \exp\{ 8C_0^{-1}\delta^{-1} \log \log(D_{\widetilde{K}}^{(2q_{\max})^{\delta/2}}) \log \log(D_{\widetilde{K}}^{{C_6}^{1/4}\delta^{-1/2}}) \}.
\end{equation}

We combine the analysis of $E_5$ with the analysis of error terms $E_1,E_2,E_3,E_4$ and recall (\ref{c0}) and (\ref{c4}). Then we obtain the followings. If 
\begin{equation}\label{c7}
D_{\widetilde{K}} \ge C_7=\max \{\exp(\exp(\frac{C_0}{2})),C_4\},
\end{equation}
then (\ref{sxt}) holds for all 
\begin{equation*}
\kappa_1^{''} \exp{[\kappa_2^{''}(\log \log(D_{\widetilde{K}}^{\kappa_3^{''}}))^{2}]} \le x \le \exp(80(\log D_{\widetilde{K}})^2)
\end{equation*}
with
\begin{equation*}
\kappa_1^{''}=C_6^{1/\delta} \delta^{-2/\delta},
\end{equation*}
\begin{equation*}
\kappa_2^{''}=\max\{4\delta^{-1},8C_0^{-1}\delta^{-1}\},
\end{equation*}
\begin{equation*}
\kappa_3^{''}=\max\{2q_{\max}, {C_6}^{1/4}\delta^{-1/2}\},
\end{equation*}
as a result of (4.30) of \cite{PTW17} and our (\ref{e5}). Moreover, Theorem \ref{cdtzf} holds with
\begin{equation}\label{kappa1}
\kappa_1=40C_6^{1/\delta} \delta^{-2/\delta},
\end{equation}
\begin{equation}\label{kappa2}
\kappa_2=\max\{4\delta^{-1},8C_0^{-1}\delta^{-1}\}+4,
\end{equation}
\begin{equation}\label{kappa3}
\kappa_3=(480C_1)^{1/5}\max\{2q_{\max}, {C_6}^{1/4}\delta^{-1/2}\},
\end{equation}
as a result of (4.47) of \cite{PTW17}.
Note that $\delta$ is given in terms of $\ep_0$ in (\ref{delta}). Aside from absolute constants, $\kappa_i$ depends on $a,b,\ep_0$, since $q_{\max}$ depends only on $a,b$.

\end{section}

\begin{section}{Proof of Theorem \ref{qwe}}\label{pfmain}

We prove Theorem \ref{qwe} via an adaptation of the argument in Section 5,6 of \cite{PTW17}. Notice here that because we have defined our family so that only one factor in (\ref{l}) is varying as $K$ varies, one avoids the difficulties faced in \cite{PTW17} when applying Theorem 2 of \cite{KM02} to noncuspidal representations.

For $Q=\Q(\sqrt{a},\sqrt{b})$ with $\cF_4(Q) \neq \emptyset$,
and for every $X \ge 1$, we define $\widetilde{\cF}_4(Q;X)$ to be the set containing all the Galois closures of $K$ as $K$ varies in $\cF_4(Q;X)$. Moreover, we define
\begin{equation*}
\cL_{Q}(X)=\{ L(s,\rho_{\widetilde{K}}): \widetilde{K} \in \widetilde{\cF}_4(Q;X) \},
\end{equation*}
where we recall $\rho_{\widetilde{K}}$ is the faithful 2-dimensional irreducible representation of $D_4$.
If $K_1,K_2 \in \cF_4(Q)$ have the property that $L(s,\rho_{\widetilde{K_1}})=L(s,\rho_{\widetilde{K_2}})$, then $\widetilde{K_1}=\widetilde{K_2}$ by \cite[Proposition 6.3]{PTW17}. Importantly in this application, we note that the character $\rho_{\widetilde{K}}$ is faithful. Therefore, $\cL_{Q}(X)$ is a set and that the elements in $\cL_{Q}(X)$ are in one-to-one correspondence with those in $\widetilde{\cF}_4(Q;X)$.


Recall from the lattice of fields in Section \ref{mot} that four $D_4$-quartic fields share one Galois closure, hence share one $L$-factor $L(s,\rho_{\widetilde{K}})$. In order to prove Theorem \ref{qwe}, it suffices to prove the following theorem.

\begin{thm}\label{ch4}
Let $Q=\Q(\sqrt{a},\sqrt{b})$ satisfy $\cF_4(Q) \neq \emptyset$.
For every $0<\ep_0<\frac14$, in the set $\cL_{Q}(X)$,
there are $\ll_{\ep_0} X^{\ep_0}$ $L$-functions $L(s,\rho_{\widetilde{K}})$ that could have a zero in the region (\ref{azfr2}).
\end{thm}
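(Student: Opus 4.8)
The plan is to recognize $L(s,\rho_{\widetilde K})$ as the $L$-function of a cuspidal automorphic representation of $\GL_2(\mathbb{A}_\Q)$ lying in a family whose size and conductor I can control, and then feed that family into the zero-density theorem for families of automorphic $L$-functions of Kowalski and Michel (Theorem 2 of \cite{KM02}). The point is that, $Q$ being fixed, the only Euler factor of $\zeta_{\widetilde K}(s)$ that varies with $K$ is the cuspidal one $L(s,\rho_{\widetilde K})$, so no noncuspidal factors intervene.

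\textbf{Step 1: passage to an automorphic $L$-function.} I would use that $\rho_{\widetilde K}$ is \emph{monomial}: writing $D_4=\langle r,s\rangle$ as in Section \ref{mot}, the faithful $2$-dimensional irreducible representation is $\rho_{\widetilde K}\cong\mathrm{Ind}_{\langle r\rangle}^{D_4}\chi$ for a faithful character $\chi$ of the cyclic subgroup $\langle r\rangle\cong C_4$. Let $E=\widetilde K^{\langle r\rangle}$; this is one of the three quadratic subfields $\Q(\sqrt a),\Q(\sqrt b),\Q(\sqrt{ab/\xi^2})$ of $Q$, and — crucially — it does not vary as $K$ ranges over $\cF_4(Q)$, since $a,b$ are fixed. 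By class field theory $\chi$ corresponds to a ray class character $\psi$ of $E$ of order exactly $4$, and since $\rho_{\widetilde K}$ is irreducible one has $\psi\neq\psi^\sigma$ for $\langle\sigma\rangle=\Gal(E/\Q)$; hence $\pi_{\widetilde K}:=\mathrm{AI}_{E/\Q}(\psi)$ is a cuspidal automorphic representation of $\GL_2(\mathbb{A}_\Q)$ — a weight-one CM newform if $E$ is imaginary, a dihedral Maass form of Laplace eigenvalue $1/4$ if $E$ is real — self-dual because all characters of $D_4$ are real-valued, with $L(s,\rho_{\widetilde K})=L(s,\psi)=L(s,\pi_{\widetilde K})$ by inductivity of Artin $L$-functions. (Equivalently $L(s,\rho_{\widetilde K})=\zeta_K(s)/\zeta_{\Q(\sqrt c)}(s)$, where $\Q(\sqrt c)$ is the unique quadratic subfield of $K$, which is entire by Aramata--Brauer applied to the quadratic extension $K/\Q(\sqrt c)$.) Thus $\cL_Q(X)$ embeds into the family of all cuspidal automorphic $L$-functions on $\GL_2/\Q$, and Theorem \ref{ch4} becomes a zero-density statement for this family.

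\textbf{Step 2: size and conductor of the family, and the density estimate.} Comparing $\zeta_K(s)=\zeta(s)L(s,\chi_{c^\ast})L(s,\rho_{\widetilde K})$ with the conductor--discriminant formula gives $\mathfrak q(\pi_{\widetilde K})=D_K/|D_{\Q(\sqrt c)}|\le D_K$, so every $L(s,\rho_{\widetilde K})\in\cL_Q(X)$ has analytic conductor at most $X$; likewise (\ref{l}) and conductor--discriminant give $D_{\widetilde K}=|D_Q|\,\mathfrak q(\pi_{\widetilde K})^2\asymp_Q D_K^2$, so the box (\ref{azfr2}) has the form $|t|\le T$ with $T\ll_Q(\log X)^{2/\delta}$. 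Since $\rho_{\widetilde K}$ is faithful, distinct $\widetilde K$ give distinct $L(s,\rho_{\widetilde K})$ by \cite[Proposition 6.3]{PTW17}, so $|\cL_Q(X)|=|\widetilde{\cF}_4(Q;X)|\le|\cF_4(Q;X)|\ll X$ by the upper bound of Theorem \ref{vital}. I would then invoke Theorem 2 of \cite{KM02}: for a family $\mathcal S$ of cuspidal automorphic representations of $\GL_2(\mathbb{A}_\Q)$ of analytic conductor at most $\mathfrak Q$, the number of $\pi\in\mathcal S$ with a nontrivial zero in $[1-\delta,1]\times[-T,T]$ is $\ll_\ep(|\mathcal S|\,\mathfrak Q\,T)^{c\delta+\ep}$ for every $\ep>0$ and an absolute constant $c$, the large-sieve (duality) input being furnished by Rankin--Selberg theory. (Should one prefer not to quote \cite{KM02} as a black box, the large sieve for our specific family can be verified by hand, since the coefficients $\lambda_{\pi_{\widetilde K}}(p)$ are sums of values of order-$4$ ray class characters of the \emph{fixed} quadratic field $E$, which reduces matters to the large sieve for Hecke characters of $E$.) Substituting $|\mathcal S|\ll X$, $\mathfrak Q\le X$, $T\ll_Q(\log X)^{2/\delta}$, and the prescribed value $\delta=\ep_0/(42+4\ep_0)$ from (\ref{delta}), the bound becomes $\ll_{\ep_0}X^{\ep_0}$, which is Theorem \ref{ch4}; and since each Galois closure — hence each factor $L(s,\rho_{\widetilde K})$ — is shared by (at most) four $D_4$-quartic fields, this yields Theorem \ref{qwe} with an extra factor of $4$.

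\textbf{Main obstacle.} The substantive step is Step 1: observing that after $Q$ is fixed the single varying Euler factor of $\zeta_{\widetilde K}(s)$ is cuspidal, so that the Kowalski--Michel machinery applies with no modification — in \cite{PTW17}'s attempted treatment of all $D_4$-quartic fields the varying factors included the noncuspidal quadratic $L$-functions, which is exactly the difficulty that Remark \ref{suc} circumvents. The remaining work is bookkeeping: chasing the absolute constants through the $\GL_2$ case of the density estimate to confirm that the precise choice (\ref{delta}) of $\delta$ in terms of $\ep_0$ makes the exceptional count $\ll_{\ep_0}X^{\ep_0}$.
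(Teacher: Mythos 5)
Your proposal is correct and follows essentially the same route as the paper: pass from $\rho_{\widetilde K}$ to a cuspidal automorphic representation of $\GL_2(\mathbb{A}_\Q)$ (the paper quotes the strong Artin conjecture for dihedral groups plus \cite{Mar03}, which is the same automorphic-induction fact you invoke), bound the size and conductors of the resulting family, and apply Theorem 2 of \cite{KM02} with $\delta$ as in (\ref{delta}) and the box height $(\log D_{\widetilde K})^{2/\delta}$ absorbed into the $T$-parameter. The only small inaccuracies — the fixed field $\widetilde K^{\langle r\rangle}$ can be any of the three quadratic subfields of $Q$ rather than a single fixed one, and your paraphrase of the Kowalski--Michel exponent is looser than their actual $T^{B}X^{c_0(1-\alpha)/(2\alpha-1)}$ form — do not affect the argument.
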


\begin{proof}
We will prove this via an application of Theorem 2 of \cite{KM02}, which gives an upper bound for the zero density in a family of cuspidal automorphic $L$-functions. We first verify the conditions Kowalski and Michel's work requires in our specific setting.

We note that the strong Artin conjecture is true for dihedral groups (see \cite{Lan80}). Thus, for a $D_4$-quartic field $K$ and its associated $L$-function $L(s,\rho_{\widetilde{K}})$, there exists an automorphic representation $\pi_{\widetilde{K}}=\pi(\rho_{\widetilde{K}})$ on $\mathrm{GL}_2(\mathbb{A}_\Q)$ such that $L(s,\rho_{\widetilde{K}})$ and $L(s,\pi_{\widetilde{K}})$ agree almost everywhere. Since $\rho_{\widetilde{K}}$ is irreducible, $\pi_{\widetilde{K}}$ is cuspidal. Moreover, if $\pi$ is cuspidal and $L(s,\pi_v)=L(s,\rho_v)$ for almost all $v$, then in fact $L(s,\pi)=L(s,\rho)$ (see Proposition 2.1 of \cite{Mar03}). Thus, there exists a cuspidal automorphic representation $\pi_{\widetilde{K}}$ on $\GL_2(\mathbb{A}_\Q)$ such that $L(s,\pi_{\widetilde{K}})=L(s,\rho_{\widetilde{K}})$. We let
\begin{equation*}
S_{Q}(X)=\{ \pi_{\widetilde{K}}: \widetilde{K} \in \widetilde{\cF}_4(Q;X) \}.
\end{equation*}
Since $\cL_{Q}(X)$ is a set, so is $S_{Q}(X)$. Moreover, if $\pi_{\widetilde{K_1}}=\pi_{\widetilde{K_2}}$, then $L(s,\rho_{\widetilde{K_1}})=L(s,\rho_{\widetilde{K_2}})$ and hence $\widetilde{K_1}=\widetilde{K_2}$. Therefore, the elements in $S_{Q}(X)$ are in one-to-one correspondence with those in $\widetilde{\cF}_4(Q;X)$.

The result of Theorem 2 of \cite{KM02} requires four conditions on $S_{Q}(X)$, which we now verify.

(1) Every element in $S_{Q}(X)$ satisfies the Ramanujan-Petersson Conjecture, since the Ramanujan-Petersson Conjecture is automatically true for automorphic $L$-functions corresponding to Artin $L$-functions. 

(2) There exists $A>0$ such that for all $X \ge 1$ and all $\pi \in S_{Q}(X)$,
\begin{equation*}
 \mathrm{Cond}(\pi) \ll X^{A}. 
 \end{equation*}
Indeed, Lemma 6.1 of \cite{PTW17} shows that $D_{\widetilde{K}} \ll D_K^4$, so that by the conductor-discriminant formula $D_{\widetilde{K}}=\prod_{\rho \in \mathrm{Irr}(D_4)} \mathrm{Cond}(\rho)^{\rho(1)}$, we see that $A=4$ suffices for our purpose. 

(3) For all $X \ge 1$, we have
\begin{equation*}\label{pr3}
 |S_{Q}(X)| \ll X. 
 \end{equation*}
This holds because $|S_{Q}(X)|=|\widetilde{\cF}_4(Q;X)| \le |\cF_4(Q;X)| \ll X$.

(4) Since the strong Artin conjecture is true for $G=D_4$, for every $K \in \cF_4(Q;X)$, the convexity bound
\begin{equation*}
|L(s,\pi_{\widetilde{K}})| \ll_{\ep} (\mathrm{Cond}(\pi_{\widetilde{K}})(|t|+2)^m)^{(1-\Re(s))/2+\ep}
\end{equation*}
holds for any $0 \le \Re(s) \le 1$ and any $\ep>0$. Also, we have an analogous convexity bound for Rankin-Selberg $L$-functions $L(s,\pi_{\widetilde{K}} \otimes \pi_{\widetilde{K^{'}}})$, where $\pi_{\widetilde{K}} \ncong \pi_{\widetilde{K^{'}}}$. See Section 6.2 of \cite{PTW17}.

We define the zero-counting function for $\pi=\pi_{\widetilde{K}}$:
\begin{equation*}
N(\pi;\alpha,T)=| \{ s=\beta+i\gamma: \beta \ge \alpha, |\gamma| \le T, L(s,\pi)=0\} |.
\end{equation*}
Now we apply Theorem 2 in \cite{KM02}, which in our context takes the following form.\begin{thm}\label{km}
Let $\alpha \ge \frac34$ and $T \ge 2$. In the context of $S_{Q}(X)$ above, there exists a constant $B \ge 0$, depending only on the parameters $(a,b,A)$ in the four properties above, such that for every $c_0>21$, we have that there exists a constant $M_{c_0}$ depending only on $c_0$ such that for all $X \ge 1$,
\begin{equation*}
\sum_{\pi \in S_{a,b}(X)} N(\pi;\alpha,T) \le M_{c_0} T^B X^{c_0\frac{1-\alpha}{2\alpha-1}}.
\end{equation*}
\end{thm}
To apply this, letting $0<\ep_0<\frac14$ be given, we set $c_0=21+\ep_0$ and set $\alpha,T$ such that $\frac{c_0(1-\alpha)}{2\alpha-1}=\ep_0/2$, $T=X^{\ep_0/(2B)}$ and recall (\ref{delta}), the defining formula for $\delta$. So we have $\alpha=\frac{2c_0+\ep_0}{2(c_0+\ep_0)}$ and $\delta=1-\alpha$. Theorem \ref{km} shows that there are $ \ll_{\ep_0} X^{\ep_0}$ $L$-functions in $\cL_{Q}(X)$ that could have a zero in $R(X)=\{ s=\sigma+it: 1-\delta \le \sigma \le 1, |t| \le X^{\ep_0/(2B)}\}$. Consequently, aside from $\ll_{\ep_0} X^{\ep_0}$ exceptions in $\cL_{Q}(X)$, $L(s,\rho_{\widetilde{K}})=L(s,\pi_{\widetilde{K}})$ is zero-free in the region (\ref{azfr2}) for all $X$ sufficiently large such that
\begin{equation}\label{last}
(\log D_{\widetilde{K}})^{2/\delta}<X^{\ep_0/(2B)}.
\end{equation}
Note that by Lemma 6.1 of \cite{PTW17}, we have $D_{\widetilde{K}} \le c_1 D_K^4$, for some absolute constant $c_1>0$. Together with the relation $D_K \le X$, (\ref{last}) will follow as long as $X$ is sufficiently large such that
\begin{equation}\label{last2}
(4\log X+\log c_1)^{2/\delta}<X^{\ep_0/(2B)}.
\end{equation}
Any fixed power of $X$ is greater than any fixed power of $\log X$ once $X$ is sufficiently large. Therefore, there exists a constant $D_0=D_0(\ep_0)$ such that (\ref{last2}) (hence (\ref{last})) holds whenever $X \ge D_0$. For the remaining cases with small discriminant $X<D_0$, we have $|\cL_{Q}(X)| \le |\cF_4(Q;X)| \ll D_0 \ll_{\ep_0} 1$. Theorem \ref{ch4} then follows.
\end{proof}

\end{section}

\begin{section}{Proof of Theorem \ref{tor}}

We use Theorems \ref{cdtzf} and \ref{qwe} to prove Theorem \ref{tor}. As a consequence of Theorem \ref{cdtzf}, we have the following proposition (analogous to Corollary 3.16 in \cite{PTW17}).
\begin{prop}\label{smallprime}
Let 
$Q=\Q(\sqrt{a},\sqrt{b})$ be a biquadratic field satisfying $\cF_4(Q) \neq \emptyset$. For every $\ep_0>0$ sufficiently small, let $\delta$ be defined as in (\ref{delta}). 
Then for any $\sigma>0$, there exists a constant $B_3=B_3(\ep_0,\sigma)$ such that for every $X \ge 1$, every field $K \in \cF_4(Q)$ that has $D_K \ge B_3$ and whose associated $L$-function $L(s,\rho_{\widetilde{K}})$ (see (\ref{l})) is zero-free in the region (\ref{azfr2}), has the property that for any fixed conjugacy class $\cC$ in $D_4$, there are at least 
\begin{equation*}
\gg_{\cC,\sigma} \frac{D_K^\sigma}{\log D_K}
\end{equation*}
rational primes $p \le D_K^\sigma$ with Artin symbol $\left[ \frac{\widetilde{K}/\Q}{p} \right]=\cC$.
\end{prop}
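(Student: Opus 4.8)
The plan is to deduce Proposition~\ref{smallprime} directly from the effective Chebotarev density theorem of Theorem~\ref{cdtzf}, evaluated at the single point $x = D_K^\sigma$. Since the hypothesis on $L(s,\rho_{\widetilde K})$ in Proposition~\ref{smallprime} is exactly the zero-free region assumed in Theorem~\ref{cdtzf} (recall $\zeta_{\widetilde K}(s)/\zeta_Q(s) = L^2(s,\rho_{\widetilde K})$), the only genuine work is to check that $x = D_K^\sigma$ lies in the admissible range $x \ge \kappa_1\exp[\kappa_2(\log\log(D_{\widetilde K}^{\kappa_3}))^2]$ of Theorem~\ref{cdtzf} once $D_K$ is large in terms of $\ep_0$ and $\sigma$; that threshold then defines $B_3$.

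First I would invoke Lemma~6.1 of \cite{PTW17}, which gives $D_{\widetilde K}\le c_1 D_K^4$ for an absolute constant $c_1$; in particular $D_{\widetilde K}\to\infty$ with $D_K$, so for $D_K$ large the hypothesis $D_{\widetilde K}\ge C_7$ needed in Theorem~\ref{cdtzf} is automatic. Moreover $\log\log(D_{\widetilde K}^{\kappa_3}) \le \log\log D_K + O_{\kappa_3,c_1}(1)$, whence
\[
\kappa_1\exp\!\big[\kappa_2(\log\log(D_{\widetilde K}^{\kappa_3}))^2\big] \;\le\; \kappa_1\exp\!\big[\kappa_2\big(\log\log D_K + O_{\kappa_3,c_1}(1)\big)^2\big].
\]
Since $(\log\log D_K)^2 = o(\log D_K)$ while $D_K^\sigma = \exp(\sigma\log D_K)$, the right-hand side is $\le D_K^\sigma$ for all $D_K$ exceeding a bound $B_3 = B_3(\ep_0,\sigma)$; note that $\kappa_1,\kappa_2,\kappa_3$ depend only on $a,b,\ep_0$ and that $Q$ is fixed throughout. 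Enlarging $B_3$ if necessary, I also ensure $D_{\widetilde K}\ge C_7$ and that $D_K^\sigma$ exceeds any fixed absolute constant invoked below.

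With $x = D_K^\sigma$ admissible, Theorem~\ref{cdtzf} gives
\[
\pi_\cC(D_K^\sigma,\widetilde K/\Q) \;\ge\; \frac{|\cC|}{|G|}\Big(\mathrm{Li}(D_K^\sigma) - \frac{D_K^\sigma}{(\sigma\log D_K)^2}\Big).
\]
Using $\mathrm{Li}(x) = x/\log x + O\big(x/(\log x)^2\big)$ with $\log x = \sigma\log D_K$, the main term dominates the error once $D_K\ge B_3$ (after one more enlargement), so $\mathrm{Li}(D_K^\sigma) - D_K^\sigma/(\sigma\log D_K)^2 \ge \tfrac{1}{2\sigma}\,D_K^\sigma/\log D_K$, and therefore
\[
\pi_\cC(D_K^\sigma,\widetilde K/\Q) \;\gg_{\cC,\sigma}\; \frac{D_K^\sigma}{\log D_K}.
\]
Because $\pi_\cC$ counts exactly those rational primes $p\le D_K^\sigma$ that are unramified in $\widetilde K$ with $\left[\frac{\widetilde K/\Q}{p}\right]=\cC$, this is the asserted lower bound, completing the proof.

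The one step that is not a formal substitution is the inequality $\kappa_1\exp[\kappa_2(\log\log(D_{\widetilde K}^{\kappa_3}))^2] \le D_K^\sigma$: it is precisely here that one exploits both the subpolynomial (indeed $\exp[(\log\log)^2]$-shaped) cutoff in Theorem~\ref{cdtzf} and the polynomial bound $D_{\widetilde K}\ll D_K^4$, so that an arbitrarily small power $D_K^\sigma$ is already above the range where Theorem~\ref{cdtzf} applies. Everything else is immediate from Theorem~\ref{cdtzf} and the classical asymptotic $\mathrm{Li}(x)\sim x/\log x$.
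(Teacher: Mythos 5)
Your proposal is correct and is essentially the paper's (omitted) argument: the paper deduces Proposition \ref{smallprime} from Theorem \ref{cdtzf} exactly as Corollary 3.16 is deduced from Theorem 3.1 in \cite{PTW17}, i.e.\ by evaluating the Chebotarev bound at $x=D_K^\sigma$ and noting that the threshold $\kappa_1\exp[\kappa_2(\log\log(D_{\widetilde K}^{\kappa_3}))^2]$ is subpolynomial in $D_K$ because $D_{\widetilde K}\ll D_K^4$. One cosmetic slip: the fact that $D_{\widetilde K}\ge C_7$ for $D_K$ large follows from a lower bound such as $D_K\le D_{\widetilde K}$ (from the tower formula), not from the upper bound $D_{\widetilde K}\le c_1D_K^4$, but this does not affect the argument.
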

Proposition \ref{smallprime} is deduced from Theorem \ref{cdtzf} in the same manner that Corollary 3.16 is deduced from Theorem 3.1 in Section 6.6 of \cite{PTW17}, thus we omit the proof here.

We recall Lemma 2.3 of Ellenberg and Venkatesh \cite{EV07}. In our setting, it has the following form.
\begin{prop}\label{ev}
Let $K$ be a $D_4$-quartic field and fix a positive integer $\ell$. Set $\eta<\frac{1}{6\ell}$ and suppose that there are at least $M$ rational primes with $p \le D_K^\eta$ that are unramified and split completely in $K$. Then
\begin{equation*}
|\Cl_K[\ell]| \ll_{\ell,\ep_1} D_K^{\frac12+\ep_1}M^{-1},
\end{equation*}
for every $\ep_1>0$.
\end{prop}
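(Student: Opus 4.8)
\textbf{Proof plan for Proposition \ref{ev}.}
The statement is essentially a restatement of Lemma 2.3 of \cite{EV07} in the special case $d=[K:\Q]=4$, so the plan is to recall the mechanism of that lemma and check that the numerology matches. The key input is that if $p$ is a rational prime, unramified in $K$, that splits completely in $K$, then there are (at least) $d=4$ distinct prime ideals $\mathfrak{p}_1,\dots,\mathfrak{p}_4$ of $\cO_K$ above $p$, each of norm $p$. The Ellenberg--Venkatesh argument considers ideals of $\cO_K$ of the form $\prod_i \mathfrak{p}_i^{e_i}$ with $\sum_i e_i \le m$ for a suitable parameter $m$: on the one hand the ideal class of such a product is killed by $\ell$ as soon as the class $[\prod_i \mathfrak{p}_i^{e_i}]$ lies in $\Cl_K[\ell]$, and on the other hand one counts how many such products give the \emph{same} ideal class. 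The first step of the proof is therefore to set up this counting: the number of tuples $(e_i)$ with $0 \le e_i$ and $\sum e_i = k$ supported on primes above the $M$ completely split primes $p \le D_K^\eta$ is roughly $\binom{4M+k-1}{k}$, and all the resulting ideals have norm $\le (D_K^{\eta})^{k} = D_K^{\eta k}$.

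The second step is the pigeonhole/geometry-of-numbers estimate on the number of ideals of bounded norm in a fixed ideal class: an ideal class of $\cO_K$ contains $\ll_{K,\ep} N^{1+\ep}$ (in fact, using Minkowski, $\ll N + D_K^{1/2}$, which is $\ll N^{1+\ep}$ once $N \ge D_K^{1/2}$, and one arranges the parameters so this holds) integral ideals of norm $\le N$. Comparing the two counts with $N = D_K^{\eta k}$ shows that if the total number of available product-ideals exceeds $|\Cl_K[\ell]| \cdot (\text{number of ideals per class of norm} \le N)$, then two distinct products $I \ne I'$ lie in the same ideal class, whence $I (I')^{-1}$ is a nontrivial principal-free relation; taking $\ell$-th powers and using that these classes are $\ell$-torsion produces an actual algebraic identity that, via the theory of heights / counting algebraic integers of bounded height in $K$, cannot happen too often. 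Unwinding, one gets $|\Cl_K[\ell]| \ll_{\ell,\ep_1} D_K^{1/2+\ep_1} M^{-1}$ provided $\eta$ is small enough that the relevant norm $D_K^{\eta k}$ stays below $D_K^{1/2}$ for the chosen $k$; the bound $k \le$ (something like) $3\ell$ in the optimization is exactly what forces the constraint $\eta < \frac{1}{6\ell}$ (the factor $d-1 = 3$ from the rank of the unit group enters here, matching the exponent $\frac{1}{2\ell(d-1)} = \frac{1}{6\ell}$ appearing in \eqref{grh} and \eqref{toreq}).

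The main obstacle — though it is entirely contained in \cite{EV07} and so is cited rather than reproved — is the step bounding the number of principal ideals with a generator of bounded height, i.e. controlling the ``collision'' coming from units: one must pass from an equality of ideal classes to an equality of elements up to units, and then bound the number of units of bounded size, which is where the unit rank $d-1=3$ and hence the exponent $\frac{1}{6\ell}$ come from. Since the excerpt only needs the $D_4$-quartic instance and $d=4$ is fixed, the plan is simply to invoke Lemma 2.3 of \cite{EV07} with $n=4$, $r = $ unit rank $ = 3$, note that $\frac{1}{2\ell(d-1)} = \frac{1}{6\ell}$, and record the resulting inequality with the $\ep_1$-loss absorbed into the implied constant. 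No new ideas are required beyond tracking that the hypotheses of \cite[Lemma 2.3]{EV07} (namely $M$ completely split primes below $D_K^{\eta}$ with $\eta < \frac{1}{6\ell}$) are precisely those supplied by Proposition \ref{smallprime}.
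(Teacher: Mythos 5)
Your proposal is correct and takes essentially the same route as the paper, which gives no independent argument and simply invokes Lemma 2.3 of \cite{EV07} with $d=4$, so that $\frac{1}{2\ell(d-1)}=\frac{1}{6\ell}$. One small caveat: the factor $d-1=3$ in that exponent is the field degree minus one rather than the unit rank (which for a quartic field can be $1$, $2$, or $3$ depending on the signature), but since the proof is by citation this does not affect the conclusion.
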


Now we deduce Theorem \ref{tor} from Theorem \ref{qwe}, Propositions \ref{smallprime} and \ref{ev}, recalling the numbers $\ep_0$ and $\delta$ chosen in Proposition \ref{smallprime}. We set $\cC=\Id$, so we count unramified primes which split completely in $\widetilde{K}$, hence split completely in $K$. For any positive integer $\ell$, we choose $\ep_2>0$ sufficiently small and set $\sigma=\frac{1}{6\ell}-\ep_2$. Then for every $X \ge 1$, for any field $K \in \cF_4(Q;X)$ with $D_K \ge B_3$ that is not one of $\ll_{\ep_0} X^{\ep_0}$ fields whose associated $L$-function $L(s,\rho_{\widetilde{K}})$ could have a zero in the region (\ref{azfr2}), there are $\gg_{\ell,\ep_2} D_K^{\frac{1}{6\ell}-\ep_2}/\log D_K$ primes $p \le D_K^{\frac{1}{6\ell}-\ep_2}$ that split completely in $K$. Thus, by Proposition \ref{ev}, for such a field $K$, we have
\begin{equation*}
|\Cl_K[\ell]| \ll_{\ell,\ep_1,\ep_2} D_K^{\frac12-\frac{1}{6\ell}+\ep_2+\ep_1},
\end{equation*}
for all $\ep_1>0,\ep_2>0$ sufficiently small. Note that the number of $K \in \cF_4(Q;X)$ such that $D_K<B_3$ is $\ll B_3$, which is a constant depending on $\ell,\ep_0,\ep_2$. Theorem \ref{tor} then follows as we choose $\ep_0,\ep_1,\ep_2$ such that $\max\{\ep_0,\ep_1+\ep_2\} \le \ep$ and 
$\delta=\frac{\ep_0}{42+4\ep_0}<1-\beta_{\max}$. Note that $\beta_{\max} \in (0,1)$, defined in (\ref{betamax}), is a fixed number when $Q=\Q(\sqrt{a},\sqrt{b})$ is fixed.

\end{section}

\section*{Acknowledgement}

The author thanks his advisor Lillian Pierce for her consistent guidance, Leslie Saper for his lectures on algebraic number theory, Jiuya Wang and Asif Zaman for communication during this research. In addition, the author thanks the Hausdorff Center for Mathematics and the Institute for Advanced Study for hosting visits during this research. Finally, the author thanks the referee for providing valuable comments in mathematics and in exposition, especially for the comment on suggesting the notion of extended quadratic fields.

\footnotesize{}

\bibliographystyle{alpha}

\bibliography{NoThBibliography}

  \textsc{Department of Mathematics, Duke University, 120 Science Drive, Durham NC 27708 USA} \par  
  \textit{E-mail address}: \texttt{chen.an@duke.edu} \par
  \addvspace{\medskipamount}


\end{document}